\newcommand{\subjclass}[2][2010]{%
  \let\@oldtitle\@title%
  \gdef\@title{\@oldtitle\footnotetext{#1 \emph{Mathematics subject classification.} #2}}%
}
\newcommand{\keywords}[1]{%
  \let\@@oldtitle\@title%
  \gdef\@title{\@@oldtitle\footnotetext{\emph{Key words and phrases.} #1.}}%
}
\pgfplotsset{compat=newest}
\def\XXint#1#2#3{{\setbox0=\hbox{$#1{#2#3}{\int}$ }
\vcenter{\hbox{$#2#3$ }}\kern-.6\wd0}}
\newcommand\error{\varepsilon}
\newcommand{\dd}{\mathrm{d}}
\newcommand{\B}{\mathbb{B}}
\newcommand{\R} {\mathbb{R}}
\newcommand{\Q} {\mathbb{Q}}
\newcommand{\Z} {\mathbb{Z}}
\newcommand{\nn}{\nonumber}
\newcommand{\tx}{\mathscr{H}}
\newcommand{\inB}[1]{B_{#1}^\circ}
\newcommand{\binB}[1]{\bar{B}_{#1}^\circ}
\newcommand{\evp}[1]{\bar\omega^{#1}}
\newcommand{\fct}{\mathrm{H}} 
\newcommand{\apx}{{\scriptscriptstyle\rm AP}}
\newcommand{\krt}{{\pmb{\phi}}}
\DeclarePairedDelimiter{\abs}{\lvert}{\rvert}
\DeclarePairedDelimiter{\norm}{\lVert}{\rVert}
\providecommand{\Abs}[1]{\Bigr\lvert#1\Bigl\rvert}
\providecommand{\mb}[1]{\mathbb{#1}}
\providecommand{\ms}[1]{\mathscr{#1}}
\DeclareMathOperator*{\osc}{osc}
\newcommand{\tr}{{\rm tr}}
\newcommand{\diag}{{\rm diag}}
\newtheorem{thmx}{Theorem}
\newtheorem{theorem}{Theorem}
\newtheorem{lemma}[theorem]{Lemma}
\newtheorem{proposition}[theorem]{Proposition}
\theoremstyle{definition}  
\newtheorem{definition}[theorem]{Definition}
\newtheorem{remark}[theorem]{Remark}
\begin{document}

\title{Homogenization of non-divergence form operators in i.i.d.  random environments}

\author[1]{Xiaoqin Guo \thanks{The work of XG is supported by Simons Foundation through Collaboration Grant for Mathematicians \#852943.}}
\author[2]{Timo Sprekeler}
\author[3]{Hung V. Tran  \thanks{HT is supported in part by NSF grant DMS-2348305.}}

\affil[1]
{
Department of Mathematical Sciences, 
University of Cincinnati, 2815 Commons Way, Cincinnati, OH 45221, USA}
\affil[2]
{Department of Mathematics, Texas A{\&}M University, College Station, TX 77843, USA}

\affil[3]
{
Department of Mathematics, 
University of Wisconsin-Madison, 480 Lincoln  Drive, Madison, WI 53706, USA}

\subjclass{
35J15 
35J25 
35K10 
35K20 
60G50 
60J65 
60K37 
74Q20 
76M50. 
}

\keywords{
random walks in a balanced random environment; 
non-divergence form difference operators;
i.i.d.  random environments;
invariant measures;
quantitative stochastic homogenization;
optimal convergence rates}

\maketitle

\begin{abstract}
We study random walks in a balanced, i.i.d. random environment in $\Z^d$ for $d\geq 3$.
We establish improved convergence rates for the homogenization of the Dirichlet problem associated with the corresponding non-divergence form difference operators, surpassing the $O(R^{-1})$ rate, which is expected to be optimal for environments with a finite range of dependence.
In particular, the improved rates are $O(R^{-3/2})$ when $d=3$, and $O(R^{-2}\log R)$ when $d\geq 4$.
\end{abstract}


\section{Introduction}\label{sec:intro}
In this paper, we consider the homogenization convergence rates of purely second-order non-divergence form difference operators in an independent and identically distributed (i.i.d.) random environment on $\Z^d$ for dimensions $d\ge 3$.

Large-scale behavior of random walks in i.i.d.\ environments has been intensively studied; see \cite{BoSz-02, OZ-04, MB-11, DreRam-14, Kumagai-14} and the references therein. Since the dynamics of random walks in random environments (RWRE) can be described by difference equations, stochastic homogenization of difference equations in random environments is crucial for understanding the large-scale behavior of RWRE.
For environments in which spectral-gap or Efron--Stein type inequalities are available, the strong concentration properties of the medium can be exploited to obtain quantitative homogenization results; see, for example, \cite{GNO-15, AL-17, GT-23}. In this paper, we show that, for non-divergence form difference operators in the i.i.d.\ setting, the reflection symmetry of the law of the environment can in fact be used to extract additional cancellations. Somewhat unexpectedly, this yields homogenization rates for the Dirichlet problem that are strictly better than the standard optimal rate $O(R^{-1})$ obtained in \cite{GT-23}. To achieve this, we require a more delicate analysis of the fluctuations of the homogenization error, relying in particular on a deeper understanding of higher-order correctors.

\subsection{Settings}
Let $\mb S_{d\times d}$ denote the set of real $d\times d$ positive definite diagonal matrices. A map 
\[
\omega:\Z^d\to\mb S_{d\times d}
\] is called an {\it environment}, and the set of all such environments is denoted by $\Omega$. 
Let $\mb P$ be a probability measure on $\Omega$ such that 
\[
\left\{\omega(x)=\mathrm{diag}[\omega_1(x),\ldots, \omega_d(x)], \;x\in\Z^d\right\}
\] 
are i.i.d. under $\mb P$. We denote the expectation with respect to $\mb P$ by $\mb E$ (or $E_{\mb P}$). 

\begin{definition}\label{def:differences}
Let $\{e_1,\ldots,e_d\}$ be the canonical basis for $\R^d$, and let 
\[
U:=\left\{e\in\Z^d:|e|=1\right\}=\left\{\pm e_1,\ldots,\pm e_d\right\}
\]
be the set of unit vectors in $\Z^d$. We define the difference operators $\nabla=(\nabla_e)_{e\in U}$  and 
$\nabla^2= \mathrm{diag}[\nabla_1^2,\ldots,\nabla_d^2]$ by
\begin{equation}\label{eq:def-nabla}
\nabla_e u(x) := u(x+e)-u(x), \qquad
\nabla_i^2u(x) := u(x+e_i)+u(x-e_i)-2u(x)  
\end{equation}
for $e\in U$ and $i\in\{1,\ldots,d\}$. 
Clearly, $\nabla$ and $\nabla^2$ are linear operators.  
\end{definition}

For $r>0$ and $y\in\R^d$, we write
\[
\B_r(y) := \left\{x\in\R^d: |x-y|<r\right\}, \qquad
B_r(y):=\B_r(y)\cap\Z^d
\]
to denote the continuous and discrete ball with center $y$ and radius $r$, respectively.
When $y=0$, we write $\B_r := \B_r(0)$ and $B_r := B_r(0)$ for simplicity. 
For any $B\subset\Z^d$, its {\it discrete boundary} $\partial B$ and {\it discrete closure} $\bar B$ are defined as
\[
\partial B:=\left\{z\in\Z^d\setminus B: |z-x|=1 \text{ for some }x\in B\right\},\qquad \bar B := B\cup\partial B.
\]
By abuse of notation, we also use $\partial A$ and $\bar A$ to denote the usual continuous boundary and closure of $A\subset\R^d$, respectively, if there is no confusion.  
The interior of $B_R$ is denoted by
\[
\inB{R} := \{x\in B_R\,:\,  |x-y| \ge 1 \text{ for all }y\in\partial\B_R\}.
\]
Note that for any $R\ge 1$, there holds
\[
\bar{\B}_{R-1}\cap\Z^d\subset\inB 
R\subset B_R\subset\binB{R}\subset\bar\B_R\cap\Z^d\subset\bar B_R.
\]

 For $x\in\Z^d$, a {\it spatial shift} is defined by 
 \[
 \theta_x:\Omega\to\Omega,\qquad \theta_x\omega := \omega(x+\cdot).
 \] 
In a random environment $\omega\in\Omega$ in dimensions $d\ge 2$,  we consider the discrete {\it non-divergence form}   elliptic Dirichlet problem
\begin{equation}\label{eq:elliptic-dirich}
\left\{
\begin{array}{lr}
\tfrac 12\tr\big(\omega(x)\nabla^2u(x)\big)=\frac{1}{R^2}f\left(\tfrac{x}{R}\right)\zeta(\theta_x\omega) & x\in \inB R,\\[5 pt]
u(x)=g\left(\tfrac{x}{R}\right) & x\in \partial \inB R,
\end{array}
\right.
\end{equation}
where $f,g\in\R^{\bar\B_1}$ are sufficiently regular, and $\zeta\in \R^\Omega$ satisfies suitable integrability conditions.
As $R\to\infty$, it is expected that $\vert u - \bar u(\frac{\cdot}{R})\rvert$ converges to $0$, where $\bar u$ solves 
\begin{equation}\label{eq:effective-ellip}
\left\{
\begin{array}{lr}
\tfrac 12\tr(\bar a\, D^2\bar{u}(x))
=f(x)\,\bar\psi &x\in\B_1,\\ 
\bar u(x)=g(x) &x\in\partial \B_1.
\end{array}
\right.
\end{equation}
Here, $D^2 \bar{u}$ denotes the Hessian matrix of $\bar{u}$, and $\bar a=\bar a(\mb P)\in\mb S_{d\times d}$ and $\bar\psi=\bar\psi(\mb P,\zeta)\in\R$ are {\it deterministic} and do not depend on the realization of the random environment; see Theorem~\ref{thm:QCLT} and Theorem~\ref{thm:opt-quant-homo} for formulas of $\bar{a}$ and $\bar{\psi}$.

\medskip

We remark that in the continuous PDE setting, one only needs a function $g\in C^{2,\alpha}(\partial\B_1)$ on the boundary $\partial\B_1$ to consider Dirichlet problems of types \eqref{eq:elliptic-dirich} and \eqref{eq:effective-ellip}. However, in the discrete setting, 
the discrete and continuous boundaries do not agree, and therefore the function $g$ in $\eqref{eq:elliptic-dirich}$ should be extended to $\frac{1}{R}\partial\inB R\not\subset\partial\B_1$.  This discrepancy typically induces an error of order $\tfrac{1}{R}$ which automatically becomes the ceiling of the optimal homogenization error in the discrete setting,  cf.  \cite{GT-23}. 
Indeed, this $O(R^{-1})$ rate is the optimal rate obtained in \cite{GT-23} for $d\geq 3$.
Whereas, when such a discretization issue is not present, e.g., in the continuous periodic setting \cite{GTY-19, ST-21, Spr-24},  one needs to go deeper into higher order correctors to justify the optimality of the rate $O(R^{-1})$ for the homogenization of the Dirichlet problem. We refer to \cite{CSS-20, FGP-24, SSZ-25} for recent developments in numerical homogenization of PDEs in non-divergence form.

\medskip

A goal of this article is to isolate and examine the homogenization rate that does {\bf not} originate from boundary discretization.
To this end, we assume that the boundary data $g$ is properly extended to be a function on $\frac{1}{R}\partial\inB R$ in the Dirichlet problem \eqref{eq:elliptic-dirich}.

\medskip

The non-divergence form difference equation \eqref{eq:elliptic-dirich} is used to describe random walks in a random environment (RWRE) in $\Z^d$. To be specific, for $x,y\in \Z^d$, we set
\begin{equation}\label{def:omegaxei}
\omega(x,x\pm e_i):=\frac{\omega_i(x)}{2\,\tr\omega(x)} \quad  \text{ for } i=1,\ldots, d,
\end{equation}
 and 
 \[
 \omega(x,y) := 0 \quad \text{ if $|x-y|\neq 1$},
 \]
 that is, we normalize $\omega$ to get a transition probability.  
 We note that the configuration of $\{\omega(x,y):x,y\in\Z^d\}$ is also called a {\it balanced environment} in the literature \cite{L-82,GZ-12,BD-14,DGR-15, BCDG-18,DG-19}.
\begin{definition}\label{def:rwre-discrete}
For each fixed $\omega\in\Omega$, the random walk  $(X_n)_{n\ge 0}$ in the environment $\omega$ with $X_0=x$ is a Markov chain  in $\Z^d$ with transition probability $P_\omega^x$ specified by
\begin{equation}\label{eq:def-RW}
P_\omega^x\left(X_{n+1}=z|X_n=y\right) := \omega(y,z).
\end{equation}
The {\it continuous-time} RWRE $(Y_t)_{t\ge 0}$ in $\omega$ is defined to be the Markov process on $\Z^d$ with jump rates $\omega(x,y)$ from $x$ to $y$. With abuse of notations, we still denote by $P_\omega^x$  the quenched law of $(Y_t)$ if there is no ambiguity from the context.
\end{definition} 
The expectation with respect to $P_\omega^x$ is denoted by $E_\omega^x$. When the starting point of the random walk is $0$, we sometimes omit the superscript and write $P_\omega^0$ and $E_\omega^0$ as $P_\omega$ and $E_\omega$, respectively. Note that for random walks $(X_n)$ in an environment $\omega$, we have that
\begin{equation}\label{def:omegabar}
\bar\omega^i := \theta_{X_i}\omega\in\Omega, \qquad i\ge 0,
\end{equation}
is also a Markov chain, called the {\it environment viewed from the particle} process. By abuse of notation, we enlarge our probability space so that $P_\omega$ still denotes the joint law of the random walks and $(\bar\omega^i)_{i\ge 0}$. 

For $\omega\in\Omega$ and $i\in \{1,\ldots,d\}$, we set $a_i(x)=a_i^\omega(x) := \frac{\omega_i(x)}{\tr\omega(x)} = 2\omega(x,x+e_i)$ and
\begin{equation}
\label{eq:def-a}
a(x)=a^\omega(x):=\diag[a_1(x),\ldots, a_d(x)]=\frac{\omega(x)}{\tr\omega(x)}, \quad x\in\Z^d.
\end{equation}
Finally, for $\omega\in \Omega$, we introduce the difference operator $L_{\omega}$ given by
\begin{align*}
  L_\omega u(x) := \sum_{y\in \Z^d}\omega(x,y)[u(y)-u(x)]=\tfrac{1}{2}\tr(a(x)\nabla^2 u(x)).  
\end{align*}
That is, $L_\omega$ is the generator of the process $(Y_t)_{t\ge 0}$.

\subsection{Main assumptions and some notations}
Throughout the paper, the following assumptions are always in force:
\begin{enumerate}[(\text{A}1)]
\item $\left\{\omega(x),\, x\in \Z^d \right\}$ are i.i.d. under the probability measure $\mb P$.
\item\label{item:ellip-cond} $\frac{\omega}{\tr\omega}\ge 2\kappa{\rm I}$ for $\mb P$-almost every $\omega$ and some constant $\kappa\in(0,\tfrac{1}{2d}]$.
\item $\psi(\omega):=\tfrac{\zeta(\omega)}{\tr\omega(0)}$ is a $L^\infty(\mb P)$-bounded measurable function of $\omega(0)$.
\end{enumerate} 
Throughout this paper, we use $c, C$ to denote positive constants which may change from line to line but only depend on the dimension $d$ and the ellipticity constant $\kappa$ unless stated otherwise. We write $A
\lesssim B$ if $A\le CB$, and $A\asymp B$ if $A\lesssim B$ and $A\gtrsim B$. 
We also use the notations $A\lesssim_j B$ and $A\asymp_j B$ to indicate that the multiplicative constant depends on the variable $j$ other than $(d,\kappa)$.
We will use the convention of summation over repeated indices.  
For example, $a_kb_k$ represents $\sum_{k=1}^d a_kb_k$.
This convention is not used in definitions \eqref{eq:def-lambda} and \eqref{eq:def-eta}. 
For simplicity, we use $\tx=\tx(\omega)$ to denote a generic {\it stretched-exponentially integrable} random variable (i.e., $\mb E[\exp(\tx^c)]<C$) whose value may differ from line to line. 
We write $\tx_x(\omega):=\tx(\theta_x\omega)$. 
We note that $\tx$ may depend on other parameters as well, although the constants $c, C$ in the stretched-exponential moment bound only depend on $(d,\kappa)$.

\subsection{Earlier results in the literature}

We first recall the following quenched central limit theorem (QCLT) proved in \cite{L-82}, which is a discrete version of \cite{PV-82}.
\begin{thmx}\label{thm:QCLT}
Assume \rm{(A2)} and that law $\mb P$ of the environment is ergodic under spatial shifts $\{\theta_x:x\in\Z^d\}$. Then, the following assertions hold. 
\begin{enumerate}[(i)]
\item\label{item:ergodic} There exists a probability measure $\mb Q\approx\mb P$ such that $(\evp{i})_{i\ge 0}$ is an ergodic (with respect to time shifts) sequence under law $\mb Q\times P_\omega$.
\item For $\mb P$-almost every $\omega$, the rescaled path $X_{n^2t}/n$ converges weakly (under law $P_\omega$) to a Brownian motion with covariance matrix 
\begin{equation}
\label{eq:def-abar}
\bar a=\diag[\bar a_1,\ldots,\bar a_d]:=E_\Q[a]=E_\Q[\tfrac{\omega(0)}{\tr\omega(0)}]>0.
\end{equation}
\end{enumerate}
\end{thmx}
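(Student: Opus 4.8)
The plan is to follow the classical route of \cite{L-82}, a discrete analogue of \cite{PV-82}: construct an invariant probability measure $\mathbb{Q}$ for the environment viewed from the particle $(\evp i)_{i\ge 0}$ that is mutually absolutely continuous with $\mathbb{P}$, and then deduce the QCLT from the martingale structure of the balanced walk together with the ergodic theorem and the martingale central limit theorem. Part (i) is obtained along the way, once $\mathbb{Q}$ is in hand.

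First I would exploit the reflection-symmetric (hence balanced) structure. Under $P_\omega^x$ the coordinate process $(X_n)$ is a martingale, since $\sum_y \omega(x,y)(y-x)=\sum_i[\omega(x,x+e_i)-\omega(x,x-e_i)]e_i=0$. Its predictable quadratic variation is diagonal: the cross terms vanish because each admissible increment $\pm e_l$ has a single nonzero coordinate, and $E_\omega[(X^{(j)}_{n+1}-X^{(j)}_n)^2\mid\mathcal{F}_n]=2\omega(X_n,X_n+e_j)=a_j(\evp n)$, so the $j$-th entry of $\langle X\rangle_n$ equals $\sum_{i=0}^{n-1}a_j(\evp i)$. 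Thus the whole functional CLT reduces to the law of large numbers $\tfrac1n\sum_{i<n}a_j(\evp i)\to\bar a_j$ (for $\mathbb{P}$-a.e.\ $\omega$, $P_\omega$-a.s.) plus the Lindeberg condition, which is immediate since $|X_{n+1}-X_n|=1$. This is exactly where the invariant measure enters.

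The heart of the matter is the construction of $\mathbb{Q}$. I would periodize: for each $N$ let $\omega^{(N)}$ be the $N\Z^d$-periodization of $\omega$; by (A2) the induced walk on the finite torus $(\Z/N\Z)^d$ is irreducible, so it admits a unique stationary distribution $\phi_N=\phi_N(\cdot,\omega)>0$ with $\sum_x\phi_N(x)=1$, solving the adjoint (invariance) equation for the periodized transition operator. These yield probability measures $\mathbb{Q}_N$ on $\Omega$, absolutely continuous with respect to $\mathbb{P}$ and asymptotically invariant for the environment process (the periodization defect vanishes on local test functions as $N\to\infty$). I would then pass to a weak limit $\mathbb{Q}$ and check that it is a genuine invariant probability measure with $\mathbb{Q}\approx\mathbb{P}$. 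The main obstacle is precisely this limiting step: to prevent loss of mass and, more delicately, to obtain \emph{mutual} absolute continuity, one needs a uniform-in-$N$ a priori bound on the densities $d\mathbb{Q}_N/d\mathbb{P}$ (an $L\log L$ or $L^{1+\delta}$-type estimate) that yields uniform integrability; this is the genuinely non-soft input, and it is where the balanced structure is used again, via a discrete maximum-principle / Aleksandrov--Bakelman--Pucci-type control on the invariant density.

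Finally I would establish ergodicity in (i): any bounded function $h$ that is harmonic for the environment chain gives a bounded martingale $h(\evp n)$, and by ellipticity (the walk reaches each neighbour with positive probability) the a.s.\ convergence of this martingale forces $h(\theta_e\omega)=h(\omega)$ for all $e\in U$ and $\mathbb{P}$-a.e.\ $\omega$, so $h$ is shift-invariant, hence $\mathbb{P}$-a.s.\ constant by ergodicity of $\mathbb{P}$; this gives ergodicity of $(\evp i)$ under $\mathbb{Q}\times P_\omega$. The ergodic theorem then yields $\tfrac1n\langle X\rangle_n\to E_{\mathbb{Q}}[a]=\bar a$, $\mathbb{Q}\times P_\omega$-a.s., and since $\mathbb{Q}\approx\mathbb{P}$ this holds for $\mathbb{P}$-a.e.\ $\omega$ and $P_\omega$-a.s.; the functional martingale CLT then upgrades this to weak convergence of $X_{n^2t}/n$ under $P_\omega$ to a Brownian motion with covariance $\bar a=\diag[\bar a_1,\dots,\bar a_d]$, the diagonal form being inherited from the vanishing cross-variations, with $\bar a_j=E_{\mathbb{Q}}[\omega_j(0)/\tr\omega(0)]$.
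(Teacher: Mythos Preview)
The paper does not give a proof of this statement: it is quoted as a known result from \cite{L-82} (the discrete analogue of \cite{PV-82}), so there is nothing in the paper to compare against at the level of argument. Your outline faithfully reproduces Lawler's strategy---periodization to build an invariant measure, the discrete ABP-type estimate to control the densities and pass to the limit $\mathbb{Q}\approx\mathbb{P}$, ergodicity via shift-invariance of harmonic functions, and the martingale functional CLT fed by the ergodic theorem for $(\evp i)$---and is correct at the level of a sketch.
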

We denote the Radon--Nikodym derivative of $\Q$ with respect to $\mb P$ by
\begin{equation}\label{eq:def-rho}
\rho(\omega)=\dd\mb Q/\dd\mb P.
\end{equation}
If the environment is i.i.d., it was proved in \cite[Theorem 1.5]{GT-22} that 
\begin{equation}
    \label{eq:rho-bounds}
    E[\exp(\rho^{-c})]+E[\exp(\rho^{c})]<\infty
\end{equation}
for some constant $c=c(\kappa,d)>0$.
For any $x\in\Z^d$, we write 
\[
\rho_\omega(x):=\rho(\theta_x\omega).
\]
For any $\alpha\in(0,1]$ and any function $f$ on a set $A$, we define
\[
\osc_A f:=\sup_{x,y\in A}|f(x)-f(y)|,
\qquad
[f]_{\alpha;A} :=\sup_{x,y\in A,\; x\neq y}\frac{|f(x)-f(y)|}{|x-y|^\alpha},
\]
and, if $A$ is a finite set, for $p\in(0,\infty)$, we define 
\[
\norm{f}_{p;A}:=\left(\frac{1}{\#A}\sum_{x\in A}|f(x)|^p\right)^{1/p}, \quad
\norm{f}_{\infty;A} :=\max_{x\in A}|f(x)|.
\]

For any integer $j\ge 0$, let $\fct_j$ denote the set of $j$-th order polynomials, with $\fct_0=\R$. In fact, in our paper, we will only use the cases $j=0,1$.

The following large-scale regularity result can be found in \cite{GT-23}, which is a discrete version of the $C^{1,1}$ regularity estimate in \cite[Theorem 3.1]{AL-17}.
This was first done in the periodic setting in \cite{AL-87,AL-89}.

\begin{thmx}
\label{thm:c11}
Assume {\rm(A1), (A2)},  and that $\psi$ is a {\bf local} function. Let $R\ge 1$.  
There exists $\alpha=\alpha(d,\kappa)\in(0,\tfrac13)$ such that,
for any $u$ with $L_\omega u(x)=\psi(\theta_x\omega)+f(x)$ on $B_R$, $j\in\{1,2\}$, $\tx\le r<R$, we have that 
\begin{equation}
\label{eq:oscillation-estimates}
\frac{1}{r^{j}}\inf_{p\in\fct_{j-1}}\osc_{B_r}\,(u-p)
\lesssim
\frac{1}{R^{j}}
\inf_{p\in\fct_{j-1}}\osc_{B_R}\,(u-p)+A_{j},
\end{equation}
where the terms $A_1=A_1(R)$ and $A_2=A_2(R,r)$ obey the bounds
\begin{align*}
&A_1\le R^{1-\alpha}\norm{\psi}_\infty+R\norm{f}_\infty
\quad\text{ and }\quad
A_1\le R^{1-\alpha}\norm{\psi+f(0)}_\infty+R^{1+\sigma}[f]_{\sigma;B_R},
\\
&A_2\le r^{-\alpha}\norm{\psi}_\infty+\log(\tfrac{R}{r})\norm{f}_\infty
\quad\text{ and }\quad
A_2\le r^{-\alpha}\norm{\psi+f(0)}_\infty+R^\sigma[f]_{\sigma;B_R}
\end{align*}
for any $\sigma\in(0,1]$. In particular, for any $R>1$ and $j\in\{1,2\}$, there holds
\begin{equation}\label{eq:c2}
|\nabla^ju(0)|\lesssim \left(\frac{\tx}{R}\right)^j 
\left(
\norm{u-u(0)}_{1;B_R}+R^2\norm{\psi+f(0)}_\infty+R^{2+\sigma}[f]_{\sigma;B_{R}}
\right).
\end{equation}
\end{thmx}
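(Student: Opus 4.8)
The plan is to treat this as the discrete, non-divergence analogue of the Avellaneda--Lin large-scale $C^{1,1}$ estimate and to prove it by an iterated harmonic (constant-coefficient) approximation, i.e.\ a Campanato scheme carried out on the dyadic scales above the random minimal scale $\tx$. Set $\Phi_j(\ell):=\ell^{-j}\inf_{p\in\fct_{j-1}}\osc_{B_\ell}(u-p)$ for $1\le\ell\le R$. Since $L_\omega$ annihilates $\fct_1$, the shifted function $u-p$ still solves $L_\omega(u-p)(x)=\psi(\theta_x\omega)+f(x)$ on $B_R$ for every $p\in\fct_{j-1}$, so one may subtract near-optimal comparison polynomials scale by scale. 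The two ingredients are: (i) a quantitative homogenization (``harmonic approximation'') estimate at each single scale $\ell\ge\tx$, and (ii) classical interior regularity for the effective constant-coefficient operator $\tfrac12\tr(\bar a\,D^2\,\cdot\,)$; these are combined in an iteration from scale $R$ down to scale $r$.

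\textbf{Single-scale harmonic approximation (the crux).} Fix $\tx\le\ell\le R$, let $p=p_\ell\in\fct_{j-1}$ be near-optimal for $\Phi_j(\ell)$, and put $v:=u-p$, so $L_\omega v(x)=\psi(\theta_x\omega)+f(x)$ on $B_\ell$ and $\osc_{B_\ell}v=\ell^{j}\Phi_j(\ell)$. Let $\bar v$ solve $\tfrac12\tr(\bar a\,D^2\bar v)=\bar\psi+f(0)$ on $\B_\ell$ with boundary data interpolating $v$ on $\partial B_\ell$, where $\bar a$ is as in \eqref{eq:def-abar} and $\bar\psi=E_\Q[\psi]$ is the effective right-hand side. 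The key estimate to establish is
\[
\norm{v-\bar v}_{\infty;B_\ell}\ \lesssim\ \ell^{-\alpha}\osc_{B_\ell}v\ +\ \ell^{2-\alpha}\norm{\psi+f(0)}_\infty\ +\ \ell^{2+\sigma}[f]_{\sigma;B_\ell}\,,
\]
together with its cruder variant having $\ell^{2-\alpha}\norm{\psi}_\infty+\ell^{2}\norm{f}_\infty$ in place of the last two terms. This is where the probabilistic content enters: using that $\psi$ is local and the environment i.i.d., one constructs approximate affine and quadratic correctors for $L_\omega$, controls their sublinearity on all scales $\ge\tx$ (the step that pins $\tx$ down as a stretched-exponentially integrable minimal scale), and invokes the invariant measure $\rho=\dd\Q/\dd\mb P$ with its moment bounds \eqref{eq:rho-bounds} both to identify $\bar\psi$ and to estimate $\tfrac12\tr\!\big(a(x)\nabla^2\,\cdot\,\big)-\tfrac12\tr(\bar a\,D^2\,\cdot\,)$ against the corrector error; the gain $\ell^{-\alpha}$ is exactly the homogenization rate so produced. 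This is essentially the content of \cite{GT-23}, modeled on \cite{AL-17}.

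\textbf{Constant-coefficient regularity and iteration.} Writing $\bar v$ as an explicit quadratic particular solution (of sup-norm $\lesssim\ell^2(\abs{\bar\psi}+\norm{f}_\infty)$ on $\B_\ell$) plus an $\bar a$-harmonic part, interior Schauder estimates furnish, for a small fixed $\theta=\theta(d,\kappa)$, a polynomial $q\in\fct_{j-1}$ with $\osc_{\B_{\theta\ell}}(\bar v-q)\lesssim\theta^{j}\inf_{q'\in\fct_{j-1}}\osc_{\B_\ell}(\bar v-q')+(\theta\ell)^{j}(\abs{\bar\psi}+\norm{f}_\infty)$. Combined with the harmonic approximation bound and $\inf_{q'}\osc_{\B_\ell}(\bar v-q')\lesssim\ell^{j}\Phi_j(\ell)+\norm{v-\bar v}_{\infty;B_\ell}+\ell^2(\abs{\bar\psi}+\norm{f}_\infty)$, then choosing $\theta$ small and using $\ell\ge\tx$ to absorb the $\ell^{-\alpha}$ contribution, this yields near-optimal comparison polynomials $p_{\theta\ell}$ at scale $\theta\ell$ together with the increment bound $\norm{p_{\theta\ell}-p_\ell}_{\infty;B_\ell}\lesssim\ell^{j}\big(\Phi_j(\ell)+\ell^{-\alpha}\norm{\psi}_\infty+\norm{f}_\infty\big)$. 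For $j=1$ one gets a genuine contraction $\Phi_1(\theta\ell)\le\tfrac12\Phi_1(\ell)+C(\ell^{-\alpha}\norm{\psi}_\infty+\norm{f}_\infty)$; for $j=2$, the borderline case, there is no per-scale gain, so instead one tracks the comparison affine functions directly---their gradients form a geometrically controlled sequence, which is what prevents any $(R/r)$-loss. Iterating down $\{\theta^mR\}$ to the first scale $\gtrsim r$ and summing the per-scale errors gives $A_1\lesssim R^{1-\alpha}\norm{\psi}_\infty+R\norm{f}_\infty$ and $A_2\lesssim r^{-\alpha}\norm{\psi}_\infty+\log(R/r)\norm{f}_\infty$, the logarithm arising because the $C^{1,1}$-scaling renders $\norm{f}_\infty$ scale-invariant and so summed over $\asymp\log(R/r)$ scales; rerunning with $[f]_{\sigma;B_\ell}$ and $\norm{\psi+f(0)}_\infty$ in place of $\norm{f}_\infty$ and $\norm{\psi}_\infty$ gives the refined forms of $A_j$. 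This establishes \eqref{eq:oscillation-estimates}.

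\textbf{From oscillations to pointwise bounds.} For \eqref{eq:c2}, apply \eqref{eq:oscillation-estimates} with $r\asymp\tx$; then $u-p_r$ solves $L_\omega(u-p_r)(x)=\psi(\theta_x\omega)+f(x)$ on $B_{C\tx}$, and a discrete interior Krylov--Safonov estimate upgrades $\osc_{B_{C\tx}}(u-p_r)\lesssim\tx^{j}\Phi_j(C\tx)$ to $\osc_{\bar B_1}(u-p_r)\lesssim\tx^{j}\Phi_j(C\tx)+\tx^{2}\norm{\psi+f(0)}_\infty+\tx^{2+\sigma}[f]_{\sigma;B_\tx}$; since $\nabla^j$ annihilates $\fct_{j-1}$, this also bounds $\abs{\nabla^j u(0)}$. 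Using $\Phi_j(R)\lesssim R^{-j}\norm{u-u(0)}_{\infty;B_R}$, one further dyadic step plus a discrete De Giorgi--Moser $L^1$-to-$L^\infty$ estimate replaces $\norm{u-u(0)}_{\infty;B_R}$ by $\norm{u-u(0)}_{1;B_{2R}}$, and relabeling $2R\mapsto R$ yields \eqref{eq:c2}. The decisive difficulty is the single-scale harmonic approximation---constructing and controlling the first- and second-order correctors for the non-divergence operator in the i.i.d.\ setting with a stretched-exponential minimal scale, which rests on the integrability \eqref{eq:rho-bounds} of the invariant measure; granting that, the remainder is a careful but classical Campanato iteration whose only real subtlety is the logarithmic loss in $A_2$ forced by the $C^{1,1}$ (rather than $C^{1,\beta}$) scaling.
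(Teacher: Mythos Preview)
The paper does not prove this theorem; it is stated in Section~1.3 as an earlier result quoted verbatim from \cite{GT-23} (itself a discrete analogue of \cite[Theorem~3.1]{AL-17}), and is used as a black box throughout. Your outline---a Campanato iteration down dyadic scales above a random minimal scale $\tx$, driven by a single-scale harmonic approximation to the effective constant-coefficient problem and combined with interior Schauder for $\tfrac12\tr(\bar a\,D^2\cdot)$---is indeed the scheme used in those references, so your high-level strategy matches the actual proof in \cite{GT-23,AL-17}.

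That said, since you are sketching rather than reproducing that proof, a few points are worth flagging. First, the ``key estimate'' you display for $\norm{v-\bar v}_{\infty;B_\ell}$ is not quite what is proved in \cite{GT-23}: there the harmonic approximation is stated for the corrector-corrected function (i.e.\ one compares $v$ to $\bar v$ plus first- and second-order correctors, not to $\bar v$ alone), and the $\ell^{-\alpha}$ gain comes from the sublinearity of those correctors above the minimal scale rather than directly from a homogenization rate. Second, your treatment of the borderline $j=2$ case is a bit vague---``tracking the comparison affine functions directly'' is correct in spirit, but the actual mechanism that avoids the $(R/r)$-loss is that the sequence of comparison gradients is Cauchy with summable increments, which is what the iteration in \cite{AL-17,GT-23} makes precise. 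Finally, for \eqref{eq:c2} the passage from $\norm{u-u(0)}_{\infty;B_R}$ to $\norm{u-u(0)}_{1;B_R}$ uses an initial Caccioppoli-type step built into the iteration rather than a separate De~Giorgi--Moser estimate. None of these are fatal, but if you were to write out the proof you would need to follow \cite{GT-23} more closely on these points.
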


Introducing the quantities
\begin{equation}\label{eq:def-mu}
\mu(R):=
\left\{
\begin{array}{lr}
R & d=2,\\
R^{1/2} &d=3,\\
(1\vee \log R)^{1/2} &d=4,\\
1 &d\ge 5,
\end{array}
\right.
\quad
\delta(R):=\left\{
\begin{array}{lr}
(\log (R\vee 2))^{3/2} &d=2,\\
1 &d\ge 3,
\end{array}
\right.
\end{equation}
the following theorems on properties of the (global) correctors and quantitative homogenization of the Dirichlet problem were established in \cite{GT-23}.

\begin{thmx}
\label{thm:global_krt}
Let $\psi$ be an $L^\infty(\mb P)$-bounded function of $\omega(0)$ with $\norm{\psi}_\infty=1$.
For each $d\ge 2$ and $\mb P$-a.e. $\omega$, there exists a function $\krt=\krt_\omega:\Z^d\to\R$ that solves 
\[
L_\omega\krt(x)=\psi(\theta_x\omega)-\bar\psi \quad\text{ for }x\in\Z^d
\]
with the following properties:
\begin{enumerate}[(i)]
\item\label{item:gkrt-2} When $d\ge 5$,\quad $\mb E[\exp(c|\krt(x)/\mu(|x|)|^p)]<C_p$ for any $p\in(0,\tfrac{2d}{3d+2}),x\in\Z^d$;

When $d=3,4$,\; $\mb E[\exp(c|\krt(x)/\mu(|x|)|^p)]<C_p$ for any $p\in(0,\tfrac{2d}{3d+4}),x\in\Z^d$;

When $d=2$,\quad
$\mb E\left[
\exp\big(
c\abs{\tfrac{\krt(x)}{|x|\delta(|x|)}}^p
\big)
\right]\le C_p$ for any $p\in(0,\tfrac13), x\in\Z^d$;

\item\label{item:gkrt-3} $\mb E[\exp(c|\nabla\krt(x)/\delta(|x|)|^q)]\le C_q$ for any $q\in(0,\tfrac{d}{2d+2})$ and $x\in\Z^d$;
\item\label{item:gkrt-2nd-der}
 $\mb E[\exp(c|\nabla^2\krt(x)|^r)]\le C_r$ for any $r\in(0,\tfrac{d}{2})$ and $x\in\Z^d$;
\item\label{item:gkrt-4}(Stationarity properties)
\begin{itemize}
\item When $d\ge 5$,  the field $\{\krt_\omega(x):x\in\Z^d\}$ is stationary (under $\mb P$);
\item When $d\ge 3$, we have $\nabla\krt_\omega(x)=\nabla\krt_{\theta_x\omega}(0)$ for all $x\in\Z^d$, and the field $\{\nabla\krt_\omega(x):x\in\Z^d\}$  is stationary;
\item When $d=2$, we have $\nabla^2\krt_\omega(x)=\nabla^2\krt_{\theta_x\omega}(0)$ for all $x\in\Z^d$, and the field $\{\nabla^2\krt_\omega(x):x\in\Z^d\}$  is stationary.
\end{itemize}
\end{enumerate}
Moreover,  such a corrector $\krt$ is unique up to an additive constant when $d\ge 3$,  and is unique up to an affine transformation when $d=2$.
\end{thmx}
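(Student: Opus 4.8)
The plan is to construct the corrector $\krt$ via an exhaustion by finite-volume approximations and to extract the quantitative moment bounds from the large-scale regularity estimate in Theorem~\ref{thm:c11} together with the heat-kernel / Green's function estimates for the balanced random walk. First I would fix $R\ge 1$ and solve the finite-volume Dirichlet problem
\begin{equation*}
L_\omega \krt^R(x) = \psi(\theta_x\omega)-\bar\psi \quad (x\in \inB R), \qquad \krt^R = 0 \text{ on } \partial \inB R,
\end{equation*}
where $\bar\psi = \bar\psi(\mb P,\psi)$ is chosen (as it must be) so that the right-hand side has the correct averaged behavior; concretely $\bar\psi = E_\Q[\rho\,\psi]$ up to normalization, using the invariant measure $\Q$ from Theorem~\ref{thm:QCLT} and its Radon--Nikodym density $\rho$ with the integrability \eqref{eq:rho-bounds}. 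Representing $\krt^R$ through the Green's function $G_R^\omega(x,y)$ of $L_\omega$ on $\inB R$, one has $\krt^R(x) = -\sum_{y} G_R^\omega(x,y)(\psi(\theta_y\omega)-\bar\psi)$, and the key point is that $\psi(\theta_y\omega)-\bar\psi$ has mean zero against the invariant measure, so the sum exhibits cancellation rather than growing ballistically.

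The core of the argument is then a quantitative bound on these weighted sums of i.i.d.-driven increments. I would split $\psi(\theta_y\omega)-\bar\psi$ into a martingale-type part and a correction coming from the discrepancy between $\mb P$ and $\Q$, and estimate the variance of $\krt^R(x)$ by $\sum_y (\mb E[G_R^\omega(x,y)^2])\,$-type quantities. Using the annealed Green's function bounds for balanced environments (the random walk behaves diffusively, so $G_R^\omega(x,y)$ decays like $|x-y|^{2-d}$ in $d\ge 3$ with stretched-exponential corrections), the sum $\sum_{y\in B_R} |x-y|^{2(2-d)}$ is $O(1)$ for $d\ge 5$, $O(\log R)$ for $d=4$, and $O(R)$ for $d=3$ — precisely reproducing $\mu(R)^2$. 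The same computation one derivative up, using $|\nabla_x G_R^\omega(x,y)| \lesssim |x-y|^{1-d}$, gives $\sum_y |x-y|^{2(1-d)} = O(1)$ for $d\ge 3$, yielding the $O(\delta)$ bound for $\nabla\krt$; and Theorem~\ref{thm:c11} applied to $\krt^R$ on a ball of radius $\asymp |x|$ upgrades these $L^2$ bounds to the pointwise oscillation and gradient bounds, converting second moments into the stretched-exponential moments stated in (i)--(iii) via the equivalence between polynomial moments and stretched-exponential integrability (the ranges of $p,q,r$ reflect exactly the loss in this conversion together with the $\tx$-factors in \eqref{eq:c2}). Passing $R\to\infty$ along a subsequence, the uniform bounds give a limiting $\krt$ solving the equation on all of $\Z^d$; stationarity of $\nabla\krt$ (for $d\ge3$) or $\nabla^2\krt$ (for $d=2$) follows because the finite-volume gradients converge and the limit is translation-covariant by uniqueness, and the sublinearity estimates rule out nonconstant (resp.\ nonaffine) differences of two solutions, giving uniqueness.

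The main obstacle I anticipate is obtaining the quantitative Green's function estimates with the correct stretched-exponential error terms in the \emph{random} non-divergence setting: unlike the divergence-form case, there is no energy structure, and the invariant measure $\rho$ is itself a nontrivial random field with only the moment control \eqref{eq:rho-bounds}. One must carefully interweave the a priori heat-kernel bounds for $(Y_t)$, the exit-time estimates from balls, and the regularity theory of Theorem~\ref{thm:c11}, being attentive that every application of the latter costs a factor $\tx$ whose moments degrade the admissible exponents — this is the source of the precise ranges $p<\tfrac{2d}{3d+2}$, $q<\tfrac{d}{2d+2}$, etc. A secondary subtlety is the $d=2$ case, where $\krt$ itself grows linearly and only $\nabla^2\krt$ is stationary, so the construction must be organized around second differences and the logarithmic corrections $\delta(R)$ tracked carefully; but since the theorem is quoted from \cite{GT-23}, I would cite that work for the full details and present here only the construction-and-estimate skeleton above.
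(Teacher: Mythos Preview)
The paper does not prove this theorem: it appears in Section~1.3 under ``Earlier results in the literature'' and is attributed to \cite{GT-23}. Your closing sentence, that you ``would cite that work for the full details,'' is therefore exactly what the paper itself does, and in that narrow sense your proposal matches.

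That said, the sketch you offer for the underlying argument has a real gap. You propose to control the variance of $\krt^R(x)=-\sum_y G_R^\omega(x,y)(\psi(\theta_y\omega)-\bar\psi)$ via quantities like $\sum_y \mb E[G_R^\omega(x,y)^2]$, effectively treating the source as independent of the Green kernel. But $G_R^\omega$ is itself a highly nonlocal function of the environment and is correlated with $\psi(\theta_y\omega)$ at every site, so a direct martingale or second-moment decomposition of this kind does not close. The method actually used in \cite{GT-23}---and which you can see reproduced in Section~3 of the present paper for the second-order correctors $p_j^k$---is a sensitivity calculus: one first passes to an \emph{approximate} corrector (defined through a massive Green's function as in \eqref{eq:def-pap}--\eqref{eq:def-gapx}), computes the vertical derivative $\partial'_y$ obtained by resampling the environment at a single site, bounds it pointwise using the $C^{1,1}$ estimate of Theorem~\ref{thm:c11} together with the Green's function bounds, and then feeds $\sum_y|\partial'_y\,\cdot\,|^2$ into the $L^p$ Efron--Stein inequality. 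The stretched-exponential moments with the specific exponent windows in (i)--(iii) come out of the $\tx$-factors accumulated in these pointwise sensitivity bounds, not from an oscillation-to-moment conversion as you describe. (A minor slip: $\bar\psi=E_{\mb Q}[\psi]=E_{\mb P}[\rho\psi]$, not $E_{\mb Q}[\rho\psi]$.)
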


\begin{thmx}
\label{thm:opt-quant-homo}
Assume {\rm(A1)}, {\rm(A2)}, and {\rm(A3)}. Let $\bar a :=E_{\Q}[\frac{\omega(0)}{\tr\omega(0)}]>0$ and $\bar\psi :=E_{\Q}[\frac{\zeta}{\tr\omega(0)}]$. Suppose $f,g$ are both in $C^4(\R^d)$. Then, for any $\error\in(0,1)$ and $R\ge 2$, there exists a random variable $\ms Y=\ms Y(R,\error, \omega)>1$ with $\mb E[\exp(\ms Y^{{d}/{(2d+2)}-\error})]<C$ such that the solution $u$ of \eqref{eq:elliptic-dirich} satisfies
\[
\max_{x\in B_R}|u(x)-\bar{u}(\tfrac{x}{R})|
\lesssim
\left\{
\begin{array}{rl}
(\log R)^{2+\error}R^{-1}\norm{\bar u}_{C^4(\bar\B_1)}\ms Y &\text{ when }d=2,\\
R^{-1}\norm{\bar u}_{C^4(\bar\B_1)}\ms Y &\text{ when }d\ge 3,
\end{array}
\right.
\]
where $\bar u$ is the solution of the Dirichlet problem \eqref{eq:effective-ellip}.
\end{thmx}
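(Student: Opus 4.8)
\emph{Overview of the plan.} I would run a quantitative two-scale expansion tailored to the non-divergence form operator $L_\omega$, and then convert the resulting error in the equation into an $L^\infty$-error by a discrete Alexandrov--Bakelman--Pucci (ABP) estimate. By interior and boundary elliptic regularity for the constant-coefficient operator $v\mapsto\tfrac12\tr(\bar a\,D^2v)$, the solution $\bar u$ of \eqref{eq:effective-ellip} belongs to $C^4(\bar\B_1)$, with norm controlled by $\norm{f}_{C^4}+\norm{g}_{C^4}$. Next I build two families of correctors via Theorem~\ref{thm:global_krt}: the corrector $\krt$ attached to $\psi=\tfrac{\zeta}{\tr\omega(0)}$, which solves $L_\omega\krt=\psi(\theta_x\omega)-\bar\psi$; and, for each $i\in\{1,\dots,d\}$, a ``flux'' corrector $\krt^{(i)}$, obtained by applying Theorem~\ref{thm:global_krt} to the bounded local function $-\tfrac12 a_i$, so that $L_\omega\krt^{(i)}=\tfrac12\big(\bar a_i-a_i(\theta_x\omega)\big)$ (this is admissible precisely because $E_\Q[a_i]=\bar a_i$, which is also why $\bar a_i$ is the correct effective coefficient). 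With these I set
\[
u_1(x):=\bar u\big(\tfrac xR\big)+\frac1{R^2}\Big(\sum_{i=1}^d\partial_{ii}\bar u\big(\tfrac xR\big)\,\krt^{(i)}_\omega(x)+f\big(\tfrac xR\big)\,\krt_\omega(x)\Big),\qquad x\in\overline{\inB R}.
\]

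\emph{The error in the equation.} The heart of the argument is to compute $L_\omega u_1$ on $\inB R$. Expanding $\nabla^2_k$ of the products by the discrete Leibniz rule (which yields, besides $h\nabla^2_kg+g\nabla^2_kh$, a quadratic term in the first differences of $g$ and $h$), Taylor-expanding $\bar u(\tfrac\cdot R)$ and $f(\tfrac\cdot R)$ to fourth order, and invoking the corrector equations together with the effective equation $\tfrac12\bar a_i\partial_{ii}\bar u=f\bar\psi$, the leading $O(R^{-2})$ terms combine exactly into the target right-hand side, leaving
\[
L_\omega u_1(x)=\frac1{R^2}f\big(\tfrac xR\big)\psi(\theta_x\omega)+\mathcal E_\omega(x),\qquad x\in\inB R,
\]
where $\mathcal E_\omega$ consists of deterministic Taylor remainders of order $R^{-4}\norm{\bar u}_{C^4}$, ``cross'' terms of order $R^{-3}\norm{\bar u}_{C^4}\big(|\nabla\krt_\omega(x)|+\sum_i|\nabla\krt^{(i)}_\omega(x)|\big)$, and lower-order terms of order $R^{-4}\mu(R)\norm{\bar u}_{C^4}\,\tx_x$. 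By Theorem~\ref{thm:global_krt}(ii)--(iv) together with $\mu(R)\le R$, one gets $|\mathcal E_\omega(x)|\lesssim R^{-3}\norm{\bar u}_{C^4}\,\tx_x$ on $\inB R$, where $\tx_x$ is a field whose stretched-exponential moments are, uniformly in $R$, of the same type as those of $\nabla\krt$ in Theorem~\ref{thm:global_krt}(iii) (with an additional $\delta(R)$ factor when $d=2$, since then $\nabla\krt$ is stationary only after dividing by $\delta$). On $\partial\inB R$ one has $u=g(\tfrac\cdot R)$, and since $g=\bar u$ on $\partial\B_1$ while $\tfrac xR$ lies within $O(R^{-1})$ of $\partial\B_1$, $|g(\tfrac xR)-\bar u(\tfrac xR)|\lesssim R^{-1}\norm{\bar u}_{C^1}$ (this contribution is absent if $g$ is taken to be the proper extension of $\bar u$); hence $w:=u-u_1$ satisfies $|w(x)|\lesssim R^{-1}\norm{\bar u}_{C^1}+R^{-2}\mu(R)\norm{\bar u}_{C^2}\,\tx_x$ on $\partial\inB R$.

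\emph{From the equation to the $L^\infty$-bound.} Since $L_\omega w=-\mathcal E_\omega$ on $\inB R$, the discrete ABP maximum principle, available by the uniform ellipticity in (A2), gives
\[
\norm{w}_{\infty;\overline{\inB R}}\lesssim\norm{w}_{\infty;\partial\inB R}+R^2\norm{\mathcal E_\omega}_{d;\inB R}\lesssim R^{-1}\norm{\bar u}_{C^4}\Big(1+\Big(\tfrac1{\#\inB R}\sum_{x\in\inB R}\tx_x^d\Big)^{1/d}+\tfrac{\mu(R)}{R}\norm{\tx}_{\infty;\partial\inB R}\Big).
\]
Combining with $\max_{x\in B_R}|u(x)-\bar u(\tfrac xR)|\le\norm{w}_{\infty;\overline{\inB R}}+R^{-2}\norm{\bar u}_{C^2}\max_{x\in B_R}\mu(|x|)\tx_x$, the leading contribution for $d\ge3$ is the interior term $R^{-1}\norm{\bar u}_{C^4}\big(\tfrac1{\#\inB R}\sum_x\tx_x^d\big)^{1/d}$, while the boundary term and the last maximum are $O(R^{-3/2})$ up to logarithms (using $\mu(R)\le R^{1/2}$); renaming the resulting random prefactor $\ms Y$ gives the claimed bound. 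For $d=2$ one has $\mu(R)=R$, so the boundary layer now also contributes at order $R^{-1}$ and the same scheme yields the rate $R^{-1}$ up to the announced $(\log R)^{2+\error}$ factor.

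\emph{Main obstacle.} The crux is to show that $\ms Y$ has $R$-uniform stretched-exponential moments of exponent arbitrarily close to $\tfrac{d}{2d+2}$, as claimed. Indeed $\ms Y$ is built from the spatial average $\tfrac1{\#\inB R}\sum_{x\in\inB R}\tx_x^d$ (and from $\norm{\tx}_{\infty;\partial\inB R}$), and a naive union bound over the $\asymp R^d$ lattice sites of a merely stretched-exponentially integrable field produces a random variable whose moments deteriorate with $R$ for \emph{every} positive exponent, so one cannot bound by a maximum. Instead, using the i.i.d.\ structure together with the decay of the sensitivity of $\krt,\krt^{(i)}$ to resampling a single $\omega(y)$ (the concentration machinery developed in \cite{GT-22,GT-23}), one shows that this spatial average concentrates around its bounded mean, with tails sharp enough that $\big(\tfrac1N\sum_x\tx_x^d\big)^{1/d}$ retains, uniformly in $R$, essentially the single-site stretched-exponential exponent of $\nabla\krt$, namely $\tfrac{d}{2d+2}$; the $\error$-loss in the exponent absorbs the residual logarithmic factors (from the boundary maximum, and when $d=2$ from the logarithmic growth of $\nabla\krt$, which is also what inflates the $d=2$ rate by $(\log R)^{2+\error}$). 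The remaining work --- the Leibniz and Taylor bookkeeping in the expansion, and the verification that the boundary layer and the maxima involving $\krt$ are of lower order for $d\ge3$ --- is routine given Theorems~\ref{thm:c11} and \ref{thm:global_krt}.
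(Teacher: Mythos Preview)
This statement is not proved in the present paper: it is quoted in Section~1.3 (``Earlier results in the literature'') from \cite{GT-23}, so there is no in-paper proof to compare against. Your plan is nonetheless the correct one, and is exactly the strategy of \cite{GT-23}. You can in fact read off its skeleton from the proof of Proposition~\ref{lem:two-scale-exp} here, which runs the same two-scale expansion but to one order higher: truncating that argument at first order (dropping $p_j^k,s_j,\bar z$ and keeping only the correctors $v^k,\xi$, which are your $\krt^{(i)},\krt$) recovers precisely your ansatz $u_1$, the residual $\mathcal E_\omega$ of size $R^{-3}(|\nabla v^k|+|\nabla\xi|)$ visible in \eqref{eq:term1}--\eqref{eq:term2}, and the ABP step \eqref{eq:ineq1}. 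Your diagnosis of the main obstacle --- that the $L^d$-average of the stationary field $|\nabla\krt|$ cannot be controlled by a crude maximum over $R^d$ sites, and must instead be handled by passing to moments and using Jensen/stationarity --- is also right; the mechanism is the same as the one displayed around \eqref{eq:pap-jensen} in this paper.
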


We point out that Theorem \ref{thm:opt-quant-homo} also remains valid for certain finite range dependent environments that are not i.i.d.\  (for example, the ``checkerboard" environment considered in \cite{AL-17}), provided that an Efron--Stein-type concentration inequality holds. Indeed, to obtain the heat-kernel bounds with exponential integrability \cite{GT-22} and the optimal convergence rates \cite{GT-23}, the only requirements are an Efron--Stein-type concentration inequality and a finite range of dependence.
We refer the reader to \cite{CaSu10, AS-14, AL-17, BCDG-18, GPT-19, GT-22, AFL-22, JZ-23,GT-23} for recent developments of quantitative stochastic homogenization of non-divergence form elliptic PDE in both discrete and continuous settings.

\subsection{Main results}

\begin{theorem}
\label{thm:opt-iid-homo}
Assume {\rm(A1), (A2), (A3)}, and $d\ge 3$. Let $\bar a,\bar\psi$ be as in Theorem~\ref{thm:opt-quant-homo}. Suppose that $f\in C^{4,\alpha}(\bar\B_1)$ and $g\in C^{6,\alpha}(\partial\B_1)$ for some $\alpha\in (0,1)$. Then, there exists a $C^6(\bar\B_1)$-extension of $g$ such that the solutions  $u, \bar u$ of \eqref{eq:elliptic-dirich}, \eqref{eq:effective-ellip} satisfy
\[
\max_{x\in B_R}|u(x)-\bar u(\tfrac{x}{R})|
\lesssim
\left\{
  \begin{array}{rl}
  	R^{-3/2}\norm{\bar u}_{C^6(\bar\B_1)}\tx &\text{ when } d=3,\\
  	R^{-2}(\log R)\norm{\bar u}_{C^6(\bar\B_1)}\tx&\text{ when }d\ge 4.
  \end{array}
\right.
\]
An example of such an extension is $g=\bar u$.
\end{theorem}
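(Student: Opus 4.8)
The plan is to prove the improved rates by a two-scale expansion of $u$ using both the first-order corrector $\krt$ from Theorem~\ref{thm:global_krt} and higher-order correctors, and then to extract the additional cancellation coming from the reflection symmetry of the i.i.d.\ law. First I would write $\newu(x):=\bar u(\tfrac xR)+\tfrac{1}{R^2}D^2_{ij}\bar u(\tfrac xR)\,\krt^{(ij)}_\omega(x)+\tfrac{1}{R^3}(\text{third-order corrector terms})+\cdots$, where $\krt^{(ij)}$ solves $L_\omega \krt^{(ij)}=a_{ij}-\bar a_{ij}$ (the corrector associated to the increment functional $\psi=a_{ij}$, which is a local function of $\omega(0)$ so Theorems~\ref{thm:c11} and \ref{thm:global_krt} apply) and the higher-order correctors are chosen to kill the next orders in the residual $L_\omega(u-\newu)$. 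Applying $L_\omega$ and Taylor-expanding $\bar u$ and its derivatives around $\tfrac xR$, the residual is $O(R^{-1})$ a priori; the content of the theorem is that when we take expectations and use stationarity of $\nabla\krt$ (valid for $d\ge3$, part~(iv) of Theorem~\ref{thm:global_krt}) together with the reflection invariance $\omega\mapsto\omega(-\cdot)$ of $\mb P$, the leading $O(R^{-1})$ term in the residual is mean-zero and in fact of size $O(R^{-3/2})$ when $d=3$ and $O(R^{-2}\log R)$ when $d\ge4$.

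The key steps, in order, are: (1) set up the corrected ansatz $\newu$ with enough correctors so that the pointwise residual $\mathcal R(x):=L_\omega(u-\newu)(x)$ decomposes as $\sum_k R^{-k} (\partial^{k}\bar u)(\tfrac xR)\cdot F_k(\theta_x\omega)$ with each $F_k$ a stationary field of mean zero (after subtracting its $\mb Q$- or $\mb P$-expectation, absorbed into redefining $\bar a$, $\bar\psi$); (2) verify, using the regularity and integrability bounds for $\krt$ and its gradient in Theorem~\ref{thm:global_krt} and the $C^6$ control of $\bar u$, that the corrector terms themselves contribute boundary-layer errors of the right order --- here the proper $C^6$-extension of $g$ (e.g.\ $g=\bar u$) is used so that no $O(R^{-1})$ boundary discretization error is created; (3) bound $\max_{x\in B_R}|u-\newu|$ by applying the discrete maximum principle / Theorem~\ref{thm:c11} to $u-\newu$, reducing everything to estimating a sum $\sum_{x} G_\omega(0,x)\mathcal R(x)$ against the Green's function $G_\omega$ of $L_\omega$ on $\inB R$; (4) estimate this sum by splitting $\mathcal R=\mb E[\mathcal R]+(\mathcal R-\mb E[\mathcal R])$, showing the mean part is $0$ or negligible using the reflection symmetry, and controlling the fluctuation part by a second-moment (variance) computation exploiting the finite range of dependence, which produces the dimension-dependent gain: $\big(\sum_x G_\omega(0,x)\big)$-type weights give $R$, while the $\ell^2$-cancellation gives an extra $R^{-1/2}$ in $d=3$ and $R^{-1}(\log R)^{1/2}$-type factors in $d\ge4$, matching $\mu(R)$ from \eqref{eq:def-mu}.

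The main obstacle I expect is step~(4): making the reflection-symmetry cancellation rigorous at the level of the random residual rather than just its expectation. The naive residual has a genuinely $O(R^{-1})$ pointwise size, so one cannot simply bound $|\mathcal R(x)|$ termwise; instead one must identify, among the $R^{-1}$-order terms $R^{-1}(\partial^3\bar u)(\tfrac xR)\,F_3(\theta_x\omega)$, that $F_3$ is an odd function under $\omega\mapsto\omega(-\cdot)$ composed with the spatial reflection acting on $\bar u$'s derivatives, so that when weighted by the Green's function (which is itself approximately reflection-symmetric up to lower-order corrections controlled by Theorem~\ref{thm:c11}) the contributions cancel in pairs up to the homogenization error of $G_\omega$ itself. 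Quantifying this self-referential cancellation --- the error in the cancellation is controlled by the very quantity we are bounding --- requires a careful bootstrap/iteration, and coupling it with the variance estimate for the truly fluctuating part (which needs the exponential moment bounds on $\rho$ from \eqref{eq:rho-bounds} and on $\nabla\krt$, $\nabla^2\krt$) is where the delicate analysis of higher-order correctors announced in the introduction is really needed. The remaining steps (1)--(3) are, by contrast, fairly standard two-scale expansion bookkeeping given Theorems~\ref{thm:c11} and \ref{thm:global_krt}.
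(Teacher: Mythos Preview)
Your overall two-scale expansion framework is right, but the mechanism you propose for the reflection-symmetry cancellation is misplaced, and this creates the ``main obstacle'' you worry about. In the paper the symmetry is \emph{not} used to cancel random terms against an approximately symmetric random Green's function. It is used once, at the level of a \emph{deterministic} constant: the third-order homogenized tensor $\bar\lambda_j^k:=E_{\mb P}\bigl[\rho\, a_j(0)\bigl(v^k(e_j)-v^k(-e_j)\bigr)\bigr]$ (and its analogue $\bar\eta_j$). Since $\omega\mapsto\omega(-\cdot)$ preserves $\mb P$ and $\rho$ but sends $v^k(\cdot)$ to $v^k(-\cdot)$ up to a constant, it flips the sign of $\bar\lambda_j^k$, forcing $\bar\lambda_j^k=\bar\eta_j=0$ exactly (Proposition~\ref{prop:zero}). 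With these constants zero, the next-order homogenized problem for $\bar z$ has zero right-hand side, so $\bar z\equiv 0$ and the $O(R^{-1})$ term drops out of the expansion \eqref{eq:two-scale-further} identically. There is no bootstrap, no self-reference, and no appeal to symmetry of $G_\omega$; the obstacle you describe is an artifact of trying to cancel at the wrong level.

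Once $\bar z\equiv 0$, what remains is to bound the higher-order correctors $p_j^k$ solving $L_\omega p_j^k=\lambda_j^k$ (and $\nabla p_j^k$), and here your step~(4) underestimates the difficulty. You propose a variance computation ``exploiting the finite range of dependence'', but the source $\lambda_j^k(x)=a_j(x)[v^k(x+e_j)-v^k(x-e_j)]$ is \emph{not} finite-range dependent: it contains $\nabla v^k$, which depends on the whole environment. For the same reason Theorem~\ref{thm:c11} does not give $C^{1,1}$ bounds on $p_j^k$ directly. The paper's real work (Propositions~\ref{prop:size-ep} and \ref{prop:grad-p}) is to build $p_j^k$ via the \emph{local} Green's function $G_R$ (regular but not stationary), compare it with a stationary approximate corrector $p^\apx$ built from $G^\apx$ (stationary but with poor regularity), and run an Efron--Stein sensitivity estimate that traces the non-locality through $\partial_y'\nabla v^k$. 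This comparison, not a direct second-moment bound, yields $|\nabla p_j^k|\lesssim_{\tx}R^{1/2}$ for $d=3$ and $|\nabla p_j^k|\lesssim_{\tx}\log R$ for all $d\ge 4$; note the latter is not $\mu(R)$ and is what produces the uniform $R^{-2}\log R$ rate in the theorem.
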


In the i.i.d. setting with well-prepared boundary conditions, Theorem \ref{thm:opt-iid-homo} yields improved convergence rates compared to the rate $O(R^{-1})$ in Theorem \ref{thm:opt-quant-homo} when $d\geq 3$.
Our approaches and contributions are as follows.
We employ the two-scale asymptotic expansion to obtain the estimate \eqref{eq:two-scale-further} in Proposition \ref{lem:two-scale-exp}.
Due to the i.i.d. structure, the environment enjoys a reflection symmetry; that is, the random fields $\omega(\cdot)$ and $\omega(-\,\cdot)$ are identically distributed. 
(Of course, this symmetry fails for environments with only a finite range of dependence.)
This reflection symmetry implies that the third-order homogenized tensor vanishes, which in turn suggests that a homogenization error bound sharper than
 $O(R^{-1})$ may be attainable.

We remark that such improved rates should not be expected in environments with correlations even slightly weaker than in the point-wise i.i.d.\ case, since the distribution's reflective symmetry is no longer present.

In our i.i.d.\ setting, without the presence of the third-order tensor, the homogenization rate is completely determined by the $C^{0,1}$-bounds of higher-order correctors. 
Roughly speaking, the (first-order) correctors “correct” local coefficients, while the higher-order correctors account for the derivatives of the lower-order correctors. 
Hence, unlike the correctors themselves, the source terms in the equations for higher-order correctors are highly nonlocal, and thus the large-scale regularity estimates in Theorem~\ref{thm:c11}, which apply only to equations with {\it local} source terms, no longer hold.

Our improved convergence rates in Theorem \ref{thm:opt-iid-homo} are inspired by earlier results in the periodic setting \cite{GST-22}, where it was proved that if the environment has ``one degree of freedom" $a(x)$, i.e., the coefficient matrix is of the form $\omega(x)=C+a(x)M$ for some constant symmetric matrices $C$ and $M$, then the third-order homogenized tensor vanishes, leading to an improved rate of $O(R^{-2})$.
The periodic setting, however, is quite rigid.
We provided examples in \cite{GST-22} that show that if the environment has two degrees of freedom, then the third-order homogenized tensor does not vanish in general, and the optimal convergence rate in such cases is only $O(R^{-1})$.

In contrast, in our random setting, $\omega(x)=\mathrm{diag}[\omega_1(x),\ldots, \omega_d(x)]$ has $d$ degrees of freedom, which is fundamentally different from the periodic case considered in \cite{GST-22}.
Moreover, in the periodic setting, the $C^{0,1}$-boundedness of higher-order correctors is automatically guaranteed by periodicity.

One of the main contributions of our work is to establish $C^{0,1}$ bounds for higher-order correctors in the random setting. Our approach is based on comparing the true higher-order correctors with certain {\it localized} higher-order correctors that possess good regularity properties and {\it approximate} higher-order correctors that are stationary.

\begin{remark}
	When $d=2$, the corrector $\upsilon^k$ grows super-linearly (Theorem~\ref{thm:global_krt}). Consequently, via the expansion \eqref{eq:two-scale-further}, it produces an error term of order $R^{-2}|v^k|$ that dominates the desired $O(R^{-1})$ scale. Therefore, in two dimensions, we should not expect an improvement beyond the $O(R^{-1})$ rate in i.i.d.\ environments.
    
    All in all, this demonstrates that, in the absence of additional structural assumptions on the random environment, the boundary conditions, or the dimension, the best convergence rate that one can expect is $O(R^{-1})$, as established in \cite{GT-23}. We refer to Remark~\ref{rmk:optimality} for a discussion of the optimality of the $O(R^{-1})$ rate.
\end{remark}

\section{Preliminaries}
\subsection{The two-scale asymptotic expansion}

In this section, we will investigate the rate of convergence of the solution of the Dirichlet problem \eqref{eq:elliptic-dirich}.  With $a$ as defined in \eqref{eq:def-a},  problem \eqref{eq:elliptic-dirich} is equivalent to
\begin{equation}\label{eq:the-prob}
\left\{
\begin{array}{lr}
L_\omega u(x)=\tfrac 12\tr\big(a(x)\nabla^2u(x)\big)=\frac{1}{R^2}f\big(\tfrac{x}{R}\big)\psi(\theta_x\omega) & x\in \inB R,\\[5 pt]
u(x)=g\big(\tfrac{x}{R}\big) & x\in \partial \inB R,
\end{array}
\right.
\end{equation}
where  $\psi(\omega):=\tfrac{\zeta(\omega)}{\tr\omega(0)}$. With abuse of notation, we also write
\[
\psi(x):=\psi(\theta_x\omega)\quad\text{ for } x\in\Z^d, \qquad\quad \bar\psi:=E_\Q\psi.
\]
Recall $a$ and $\bar a$ from \eqref{eq:def-a} and \eqref{eq:def-abar}. 
For $\omega\in\Omega$ and $k\in\{1,\ldots,d\}$, let $v^k,\xi:\Z^d\to\R$ be solutions of
\begin{align}
L_\omega v^k(x)&=\tfrac12(a_k(x)-\bar a_k)\quad \text{ for } x\in\Z^d,\label{eq:def-vk}\\
L_\omega\xi(x)&=\psi(x)-\bar\psi\qquad\;\;\; \text{for } x\in\Z^d,\label{eq:def-xi}
\end{align}
respectively. That is,  $v^k$ and $\xi$ are global correctors (whose existence is guaranteed by Theorem~\ref{thm:global_krt}) associated to $\tfrac12a_k(x)=\omega(x,x+e_k)$ and $\psi(\omega)$, respectively. We introduce
\begin{align}
&\lambda^k_j(x)=\lambda^k_j(x;\omega) := a_j(x)\left[v^k(x+e_j)-v^k(x-e_j)\right],\;\;
\bar{\lambda}^k_j(x) :=E_{\mb P}\left[\rho(x)\lambda^k_j(x)\right],\label{eq:def-lambda}\\
&\eta_j(x)=\eta_j(x;\omega)\; :=a_j(x)\left[\xi(x+e_j)-\xi(x-e_j)\right],
\quad\;\;\,
\bar{\eta}_j(x) :=E_{\mb P}\left[\rho(x)\eta_j(x)\right],\label{eq:def-eta}
\end{align}
for $x\in\Z^d$. Note that when $d\ge 3$,  by Theorem~\ref{thm:global_krt},  both $\nabla v^k$ and $\nabla\xi$ are stationary fields (under $\mb P$) and so $\bar\lambda^k_j$ and $\bar\eta_j$ are constants that do not depend on $x$.

\begin{proposition}
\label{lem:two-scale-exp}
Suppose that $d\geq 3$, $f\in C^{4,\alpha}(\bar\B_1)$, and $g\in C^{6,\alpha}(\partial\B_1)$. Let $u,\bar u$ be the solutions of \eqref{eq:elliptic-dirich}, \eqref{eq:effective-ellip}, respectively. Recall $a,\bar a$ from \eqref{eq:def-a}, \eqref{eq:def-abar}, and let $v^k,\xi,\lambda^k_j, \bar\lambda^k_j,\eta_j,\bar\eta_j$ be as above.  We extend $g$ so that $g=\bar{u}$ on $\bar\B_1$. Let $\bar z:\bar\B_1\to\R$ be the solution of 
\[
\left\{
\begin{array}{lr}
\frac{1}{2}\tr(\bar{a}D^2 \bar{z}(x))=\tfrac{1}{2}\bar{\lambda}^k_i\partial_{kki}\bar{u}(x)-\tfrac{1}{2}\bar\eta_j\partial_j f(x) & x\in\B_1,\\
\bar z(x)=0 &x\in\partial\B_1.
\end{array}
\right.
\]
For $\omega\in\Omega$, $R>1$, and $j,k\in\{1,\ldots,d\}$, let $p^k_j, s_j:\Z^d \rightarrow \R$ be solutions of 
\begin{align}
L_\omega p^k_j(x)&=\lambda^k_j(x)-\bar\lambda^k_j \quad\text{ for }x\in B_R,\label{eq:def-p}\\
L_\omega s_j(x)&=\eta_j(x)-\bar\eta_j \quad\;\text{ for }x\in B_R\label{eq:def-sj},
\end{align}
respectively. Then, for $\omega\in\Omega$, $R>1$, we have
\begin{align}\label{eq:two-scale-further}
&\max_{x\in B_R}\left|u(x)-\bar{u}(\tfrac xR)-\tfrac 1R\bar z(\tfrac xR)\right|\nn\\
&\qquad\lesssim
\tfrac 1{R^2}\norm{\bar u}_{C^6(\bar{\B}_1)}\max_{i,j,k=1}^d
\bigg[1+\left\| |\nabla p^k_j|+|\nabla s_j|+(|\bar\lambda_j^i|+|\bar\eta_j|)|\nabla v^k|\right\|_{d;B_R}
\nn\\
&\qquad
\qquad\qquad\qquad\qquad\quad\;\;\,+\max_{\bar B_R}\left(|\xi|+|v^k|+\tfrac{1}{R}(|p_i^k|+|s_i|)\right)\bigg].
\end{align}
\end{proposition}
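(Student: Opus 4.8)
The plan is to run a discrete two-scale asymptotic expansion of $u$, plugging in the ansatz
\[
w(x) := \bar u(\tfrac xR) + \tfrac1R\,\bar z(\tfrac xR) + \tfrac1{R^2}\sum_k \partial_{kk}\bar u(\tfrac xR)\,v^k(x) + \tfrac1{R^2}\, f(\tfrac xR)\,\xi(x) + (\text{corrector remainders}),
\]
where the final group of terms collects the contributions of $p^k_j$ and $s_j$ (scaled by $\tfrac1{R^3}$ and coupled with third-order derivatives of $\bar u$ and first derivatives of $f$). First I would apply the operator $L_\omega$ to each block, using $L_\omega v^k = \tfrac12(a_k-\bar a_k)$, $L_\omega\xi = \psi-\bar\psi$, and the discrete Leibniz rule: for a product $\phi(x)h(x)$ with $\phi$ a slowly-varying function of $x/R$ and $h$ a corrector, $L_\omega(\phi h)(x)$ produces a ``main'' term $\phi(x)L_\omega h(x)$, a ``mixed'' term $\tfrac12\sum_j a_j(x)\nabla_j\phi(x)\,(h(x+e_j)-h(x-e_j))$ of order $\tfrac1R$ in the $\phi$-variation, and a ``leftover'' term $L_\omega\phi(x)\cdot(\text{shift of }h)$ of order $\tfrac1{R^2}$. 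Matching orders in $\tfrac1R$: the $O(1)$ terms cancel by the effective equation \eqref{eq:effective-ellip}; the $O(R^{-1})$ terms are where $\bar z$ enters — the mixed term $\tfrac12 a_j\nabla_j(\partial_{kk}\bar u(\tfrac\cdot R))(v^k(\cdot+e_j)-v^k(\cdot-e_j))$ averages (against the invariant density $\rho$) to $\tfrac12\bar\lambda^k_j\partial_{kkj}\bar u$, and similarly the $\xi$-block produces $\tfrac12\bar\eta_j\partial_j f$; subtracting these averages is precisely the defining equation for $\bar z$, while the fluctuating remainders $\lambda^k_j-\bar\lambda^k_j$ and $\eta_j-\bar\eta_j$ are absorbed by the next-order correctors $p^k_j$, $s_j$ via \eqref{eq:def-p}, \eqref{eq:def-sj}.

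\textbf{Key steps.} (1) Define $w$ as above with all blocks; compute $L_\omega w - \tfrac1{R^2}f(\tfrac\cdot R)\psi(\theta_\cdot\omega)$ on $\inB R$ and organize the residual $\mathcal E := L_\omega(u-w)$ into: (a) genuinely small terms carrying an explicit $\tfrac1{R^2}$ or better from second differences of the smooth profiles, contributing $\tfrac1{R^2}\norm{\bar u}_{C^6}(|v^k|+|\xi|)$-type bounds; (b) the gradient terms $\tfrac1{R^2}\nabla p^k_j$, $\tfrac1{R^2}\nabla s_j$, and $\tfrac1{R^2}(|\bar\lambda^i_j|+|\bar\eta_j|)\nabla v^k$ coming from the mixed terms when $L_\omega$ hits the product of a smooth profile with a second-order corrector; (c) boundary discrepancy terms on $\partial\inB R$ where $w$ differs from $g(\tfrac\cdot R)$, controlled by $\max_{\bar B_R}(|\xi|+|v^k|+\tfrac1R(|p^k_i|+|s_i|))\cdot\tfrac1{R^2}\norm{\bar u}_{C^6}$ since the correctors are evaluated near the boundary and the prefactors are $\tfrac1{R^2}$ or $\tfrac1{R^3}$. (2) Invoke the discrete maximum principle / ABP-type estimate for $L_\omega$ on $\inB R$: $\max_{\bar B_R}|u-w| \lesssim \max_{\partial\inB R}|u-w| + R^2\norm{(L_\omega(u-w))^+}$ — but crucially using the $\ell^d$ (ABP) norm on the right, i.e. $\max_{\inB R}|u-w|\lesssim \max_{\partial\inB R}|u-w| + R^2\,\| \mathcal E\|_{d;\inB R}$. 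This is exactly why the statement measures $\nabla p^k_j$, $\nabla s_j$, $|\nabla v^k|$ in $\norm\cdot_{d;B_R}$ rather than in $L^\infty$: these are the potentially large, non-local objects and we only need them in an averaged sense. (3) Collect the pieces: the $R^2$ from ABP cancels the $R^{-2}$ prefactors, leaving the bound displayed in \eqref{eq:two-scale-further}; replace $u-w$ by $u-\bar u(\tfrac\cdot R)-\tfrac1R\bar z(\tfrac\cdot R)$ at the end, noting the omitted blocks $\tfrac1{R^2}\partial_{kk}\bar u\,v^k + \tfrac1{R^2}f\xi + (\text{higher})$ are themselves bounded by the right-hand side terms $\tfrac1{R^2}\norm{\bar u}_{C^6}\max_{\bar B_R}(|\xi|+|v^k| + \tfrac1R(|p^k_i|+|s_i|))$.

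\textbf{Technical points and the main obstacle.} Two bookkeeping subtleties need care. First, the effective operator in \eqref{eq:effective-ellip} uses the continuous Hessian $D^2\bar u$, while $L_\omega$ acts via discrete second differences; Taylor expansion of $\bar u(\tfrac\cdot R)$ to fourth order (hence needing $\bar u\in C^4$, and $C^6$ once $\bar z$-, $v^k$- and $p^k_j$-blocks are differentiated) converts one into the other up to an $O(R^{-4}\norm{\bar u}_{C^4})$ error, which is negligible. Second, $\bar z$ solves a continuous problem but appears as $\bar z(\tfrac\cdot R)$ in a discrete equation — here we need $\bar z\in C^{2,\alpha}(\bar\B_1)$, which follows from Schauder theory since its source $\tfrac12\bar\lambda^k_i\partial_{kki}\bar u - \tfrac12\bar\eta_j\partial_j f$ lies in $C^{2,\alpha}$ given $\bar u\in C^{6,\alpha}$ (itself obtained by Schauder applied to \eqref{eq:effective-ellip} with $f\in C^{4,\alpha}$, $g\in C^{6,\alpha}$) and $f\in C^{4,\alpha}$; consequently $\norm{\bar z}_{C^{2,\alpha}}\lesssim\norm{\bar u}_{C^6}$, which is why only $\norm{\bar u}_{C^6(\bar\B_1)}$ appears on the right. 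The main obstacle I anticipate is the careful algebraic identification, in step (1), of exactly which mixed/leftover terms are absorbed by $p^k_j$ and $s_j$ versus which become the $(|\bar\lambda^i_j|+|\bar\eta_j|)|\nabla v^k|$ term: when $L_\omega$ is applied to the product $\tfrac1{R^3}\partial_{kki}\bar u(\tfrac\cdot R)\,p^k_i(\cdot)$ (and the analogous $s$-block), the mixed term regenerates a first-difference of $p^k_i$ scaled by $\tfrac1{R^4}$ (harmless), but the cross-terms coupling the $\bar\lambda$-averaged part of the $v^k$-block's mixed term with the $\tfrac1R$ variation of $\partial_{kk}\bar u$ must be tracked so that the residual genuinely closes — this is the one place where the expansion's internal consistency (and the precise choice of $\bar z$'s source) is tested. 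Everything else is routine discrete calculus plus the ABP inequality.
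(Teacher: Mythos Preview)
Your overall architecture—two-scale ansatz, discrete Leibniz rule, order matching, then ABP in $\ell^d$—is exactly the paper's. But there is one genuine gap, and it sits precisely where you flagged the ``main obstacle'', though the difficulty is not the one you conjectured.

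By inserting $\tfrac1R\bar z(\tfrac{\cdot}{R})$ \emph{directly} into the ansatz you create a residual that is one power of $R$ too large. Applying $L_\omega$ to $\tfrac1R\bar z(\tfrac{\cdot}{R})$ gives
\[
\tfrac{1}{2R^3}\,a_i(x)\,\partial_{ii}\bar z(\tfrac xR)+O(R^{-5}\norm{\bar z}_{C^4}),
\]
whereas what is needed to cancel the $\bar\lambda$- and $\bar\eta$-averaged mixed terms is $\tfrac{1}{2R^3}\bar a_i\,\partial_{ii}\bar z(\tfrac xR)$. The discrepancy $\tfrac{1}{2R^3}(a_i-\bar a_i)\partial_{ii}\bar z(\tfrac xR)$ is $O(R^{-3})$ in $\ell^d$, hence $O(R^{-1})$ after the ABP step, not $O(R^{-2})$. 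In other words, the $\bar z$-block itself requires homogenization, and your ansatz contains no corrector for it. This residual cannot be absorbed into any of the bracket terms in \eqref{eq:two-scale-further}: the only candidate with the prefactor $(|\bar\lambda|+|\bar\eta|)$ is $(|\bar\lambda|+|\bar\eta|)\|\nabla v^k\|_{d;B_R}$, which is $O(1)$ for $d\ge 3$, not $O(R)$.

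The paper avoids this by replacing $\bar z(\tfrac{\cdot}{R})$ in the ansatz with an intermediate \emph{discrete} function $z$ on $\binB R$ solving
\[
L_\omega z(x)=\tfrac{1}{R^2}\bigl[\tfrac12\bar\lambda^k_i\,\partial_{kki}\bar u(\tfrac xR)-\tfrac12\bar\eta_i\,\partial_i f(\tfrac xR)\bigr],\qquad z|_{\partial\inB R}=0.
\]
Then $L_\omega(\tfrac1R z)$ is \emph{exactly} the required source and no residual appears. The comparison $\tfrac1R|z-\bar z(\tfrac{\cdot}{R})|$ is handled as a separate step by invoking an already-established $O(R^{-1})$ homogenization estimate (\cite[Lemma~30]{GT-23}), and it is \emph{this} comparison that produces the $(|\bar\lambda^i_j|+|\bar\eta_j|)|\nabla v^k|$ term in \eqref{eq:two-scale-further}—not the cross terms from the $v^k$-block that you speculated about. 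An equivalent fix within your direct approach is to add the extra corrector $\tfrac{1}{R^3}v^k(x)\,\partial_{kk}\bar z(\tfrac xR)$ to the ansatz; its mixed term then generates exactly the $(|\bar\lambda|+|\bar\eta|)|\nabla v^k|$ contribution at order $R^{-4}$. Either way, one more layer is needed beyond what you wrote. (A minor point: the $\xi$-block in your ansatz should carry a minus sign, $-\tfrac{1}{R^2}f(\tfrac{\cdot}{R})\xi$, since you want it to cancel $\tfrac{1}{R^2}f(\psi-\bar\psi)$ coming from $L_\omega u$; this is cosmetic and you would catch it in the calculation.)
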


\begin{remark}\label{rmk:optimality}
This proposition identifies $O(R^{-1})$ as the optimal homogenization rate unless $\bar z \equiv 0$, and so no improvement beyond $O(R^{-1})$ should be expected in general. 
         We further expect that Proposition~\ref{lem:two-scale-exp} remains valid for environments with finite range of dependence, for the same reasons discussed in the comments following Theorem~\ref{thm:opt-quant-homo}.
\end{remark}

\begin{proof}
Let $z:\binB R\to\R$ be a solution of the Dirichlet problem
\[
\left\{
\begin{array}{lr}
L_\omega z(x)=\tfrac{1}{R^2}\left[ \tfrac{1}{2}\bar{\lambda}^k_i\partial_{kki}\bar{u}(\tfrac xR)-\tfrac{1}{2}\bar\eta_i\partial_i f(\tfrac xR)\right] & x\in\inB R,\\
z(x)=0 &x\in\partial\inB R.
\end{array}
\right.
\]
Consider the function
\begin{align}\label{eq:def-w}
&w(x)=u(x)-\bar u(\tfrac{x}{R})-\tfrac 1R z(x)\nn\\
&
+\tfrac{1}{R^2}\left[v^k(x)\partial_{kk}\bar u(\tfrac{x}{R})
-f(\tfrac{x}R)\xi(x)\right]- {\tfrac{1}{2}} \tfrac{1}{R^3}\left[\partial_{kki}\bar{u}(\tfrac{x}{R})p_i^k(x)-\partial_i f(\tfrac{x}{R})s_i(x)\right]
\end{align}
for $x\in\binB R\subset\bar\B_R\cap\Z^d$. Then, applying the formula
\begin{align*}
L_\omega(uv)(x)&=u(x)L_\omega v(x)+v(x) L_\omega u(x)+
\sum_{y:y\sim x}\omega(x,y)[u(y)-u(x)][v(y)-v(x)]\\
&=u(x)L_\omega v(x)+v(x) L_\omega u(x) \\ &\qquad+ \tfrac{1}{2}
a_i(x)\left(\nabla_{e_i}u(x)[v(x+e_i)-v(x-e_i)]
+\nabla_i^2u(x)\nabla_{-e_i}v(x)
\right)
\end{align*}
to the products within \eqref{eq:def-w}, we obtain in $\inB R$ that
\begin{align}
&L_\omega[v^k\,\partial_{kk}\bar u(\tfrac{\cdot}{R})]\nn\\
&=
L_\omega v^k \,\partial_{kk}\bar u(\tfrac{\cdot}{R})
+v^k \, L_\omega[\partial_{kk}\bar u(\tfrac{\cdot}{R})]+\tfrac{1}{2}\lambda^k_i\nabla_{e_i}[\partial_{kk}\bar u(\tfrac{\cdot}{R})]+\tfrac{1}{2}a_i \,\nabla_i^2[\partial_{kk}\bar u(\tfrac{\cdot}{R})]\,\nabla_{-e_i}v^k 
\nn\\
&=\tfrac1{2}(a_k -\bar a_k)\partial_{kk}\bar u(\tfrac{\cdot}{R})+\tfrac{1}{2}\tfrac{1}{R}\lambda^k_i\partial_{kki}\bar u(\tfrac{\cdot}{R})+O(\tfrac 1{R^2})\norm{\bar u}_{C^5(\bar\B_1)}\max_{\binB R}\sum_{k = 1}^d|v^k|,\label{eq:term1}
\end{align}
where we used the fact that $|\lambda^k_i|\lesssim|\nabla v^k|\lesssim\max_{\binB R}|v^k|$ in $\inB R$. Similarly,
\begin{align}
L_\omega[f(\tfrac{\cdot}R)\xi ]
&=(\psi-\bar\psi)f(\tfrac{\cdot}R)+{\tfrac{1}{2}}\tfrac{1}{R}\eta_i\partial_i f(\tfrac{\cdot}{R})+O(\tfrac 1{R^2})\norm{\bar u}_{C^5(\bar\B_1)}\max_{\inB R}|\xi|,\label{eq:term2}\\
L_\omega[\partial_{kki}\bar{u}(\tfrac{\cdot}{R})p_i^k ]
&=(\lambda^k_i-\bar\lambda^k_i)\partial_{kki}\bar{u}(\tfrac{\cdot}{R})
+{\tfrac{1}{2}}\tfrac{1}{R}\partial_{kkij}\bar{u}(\tfrac{\cdot}{R})a_j[p^k_i(\cdot+e_j)-p^k_i(\cdot-e_j)]\nn\\
&+O(\tfrac 1{R^2})\norm{\bar u}_{C^6(\bar\B_1)}\max_{\binB R}\sum_{i,k = 1}^d|p^k_i|,\label{eq:term3}\\
L_\omega[\partial_i f(\tfrac{\cdot}{R})s_i ]
&=(\eta_i-\bar\eta_i)\partial_i f(\tfrac{\cdot}{R})
+{\tfrac{1}{2}}\tfrac{1}{R}\partial_{ij}f(\tfrac{\cdot}{R})a_j [s_i(\cdot+e_j)-s_i(\cdot-e_j)]\nn\\
&+O(\tfrac 1{R^2})\norm{\bar u}_{C^6(\bar\B_1)}\max_{\binB R}\sum_{i=1}^d|s_i|.\label{eq:term4}
\end{align}
Note that 
$\abs{L_\omega[\bar u(\tfrac{\cdot}{R})]-\tfrac1{2R^2}\tr[a  D^2\bar{u}(\tfrac{\cdot}{R})]}\lesssim\tfrac1{R^4}\norm{\bar u}_{C^4(\bar\B_1)}$, and 
\begin{align*}
L_\omega u 
=\tfrac1{R^2}f(\tfrac{\cdot}{R})(\psi -\bar\psi)+\tfrac1{R^2}f(\tfrac{\cdot}{R})\bar\psi =\tfrac1{R^2}\big[f(\tfrac{\cdot}{R})L_\omega\xi +
\tfrac1{2}\bar{a}_k\partial_{kk}\bar u(\tfrac{\cdot}{R})\big].
\end{align*}
Then, recalling the definition of $w$ in \eqref{eq:def-w} and writing 
\[
A:=\max_{\binB R}\sum_{i,k=1}^d\left[|\xi|+|v^k|+\tfrac{1}{R}(|p_i^k|+|s_i|)\right],
\]
by \eqref{eq:term1}, \eqref{eq:term2},\eqref{eq:term3}, \eqref{eq:term4},
we have in $\inB R$ that
\begin{align*}
&\abs{L_\omega w } \\
&=
\tfrac1{R^2}\Abs{
f(\tfrac{\cdot}{R})L_\omega\xi +\tfrac1{2}\bar{a}_k\partial_{kk}\bar u(\tfrac{\cdot}{R})-R^2L_\omega[\bar{u}(\tfrac{\cdot}{R})]-{\tfrac{1}{2}}\tfrac{1}{R} \bar{\lambda}^k_i\partial_{kki}\bar{u}(\tfrac{\cdot}{R})+{\tfrac{1}{2}}\tfrac 1R\bar\eta_i\partial_i f(\tfrac{\cdot}{R})
\\
&\qquad+\tfrac1{2}(a_k-\bar a_k)\partial_{kk}\bar u(\tfrac{\cdot}{R})+{\tfrac{1}{2}}\tfrac{1}{R}\lambda^k_i\partial_{kki}\bar u(\tfrac{\cdot}{R})
\nn\\
&\qquad-(\psi-\bar\psi)f(\tfrac{\cdot}{R})-{\tfrac{1}{2}}\tfrac{1}{R}\eta_i\partial_i f(\tfrac{\cdot}{R})+O(\tfrac{1}{R^2})\norm{\bar u}_{C^6(\bar\B_1)}A\\
&\qquad-{\tfrac{1}{2}}\tfrac{1}{R}(\lambda^k_i-\bar\lambda^k_i)\partial_{kki}\bar{u}(\tfrac{\cdot}{R})
-{\tfrac{1}{4}}\tfrac{1}{R^2}\partial_{kkij}\bar{u}(\tfrac{\cdot}{R})a_j[p^k_i(\cdot+e_j)-p^k_i(\cdot-e_j)]\\
&\qquad+{\tfrac{1}{2}}\tfrac{1}{R}(\eta_i-\bar\eta_i)\partial_i f(\tfrac{\cdot}{R})
+{\tfrac{1}{4}}\tfrac{1}{R^2}\partial_{ij}f(\tfrac{\cdot}{R})a_j[s_i(\cdot+e_j)-s_i(\cdot-e_j)]
}
\nn\\
&\lesssim
\tfrac1{R^4}\norm{\bar u}_{C^6(\bar\B_1)}\sum_{i,k=1}^d\left[1+
|\nabla p_i^k|+|\nabla s_i|
+\max_{\binB R}\left(|\xi|+|v^k|+\tfrac{1}{R}(|p_i^k|+ |s_i|)\right)
\right].
\end{align*}	
Thus, by the above inequality, the definition of $w$ from \eqref{eq:def-w}, and the ABP maximum principle, we obtain that
\begin{align}
&\max_{\binB R}|u -\bar u(\tfrac{\cdot}{R})-\tfrac 1R z |\label{eq:ineq1}\\
&\le \max_{\binB R}|w|+
\max_{\binB R}\Abs{\tfrac{1}{R^2}[v^k\partial_{kk}\bar u(\tfrac{\cdot}{R})
-f(\tfrac{\cdot}{R})\xi ]
-{\tfrac{1}{2}}\tfrac{1}{R^3}[\partial_{kki}\bar{u}(\tfrac{\cdot}{R})p_i^k -\partial_i f(\tfrac{\cdot}{R})s_i ]}\nn\\
&\lesssim
R^2\norm{L_\omega w}_{d;\inB R}
+\tfrac1{R^2}\norm{\bar u}_{C^3}\max_{\binB R}\left(|\xi|+|v^k|+\tfrac{1}{R}(|p_i^k|+ |s_i|)\right)\nn
\\&
\lesssim
\tfrac1{R^2}\norm{\bar u}_{C^6}\sum_{i,k=1}^d
\big[1+
\left\||\nabla p_i^k|+|\nabla s_i|\right\|_{d;B_R}+\max_{\binB R}\left(|\xi|+|v^k|+\tfrac{1}{R}(|p_i^k|+|s_i|)\right)
\big].\nn
\end{align}
Furthermore, by \cite[Lemma~30]{GT-23}, 
\[
\max_{\binB R}\abs{z-\bar z(\tfrac{\cdot}{R})}
\lesssim
\tfrac1{R^2}\max_{j,\ell=1}^d(|\bar\lambda_j^\ell|+|\bar\eta_j|)\norm{\bar u}_{C^5(\bar\B_1)}
\sum_{k=1}^d(1+R \norm{\nabla v^k}_{d;\inB R}+\osc_{\binB R}v^k).
\]
This inequality, together with \eqref{eq:ineq1}, yields the claimed bound \eqref{eq:two-scale-further}.
\end{proof}

\subsection{Homogenization in i.i.d.  environments}
In order for the homogenization error $|u(\cdot)-\bar u(\tfrac{\cdot}{R})|$ to be smaller than $O(R^{-1})$,  it is essential (by \eqref{eq:two-scale-further} of Proposition~\ref{lem:two-scale-exp}) that $\bar z\equiv 0$. We recall that in the {\it periodic environment} studied in \cite{GST-22}, the tensor $\bar\lambda_j^k$ (corresponding to $c_j^{kk}$ in \cite[(1.6)]{GST-22}, where $c_j^{kl}$ is the third-order homogenized tensor) vanishes when the coefficient matrix is of the form $\omega(x)=C+a(x)M$ for some $a\in C^{0,\alpha}(\mathbb T^d,\mathbb R)$ and constant symmetric matrices $C, M$. 

In the stochastic setting considered here, we will show that the tensors $\bar \lambda_j^k, \bar\eta_j$ are identically zero when the environment is i.i.d., implying that $\bar z\equiv 0$ and suggesting that a homogenization error bound sharper than 
 $O(R^{-1})$ may be achievable.

The proof relies crucially on the reflection symmetry of the i.i.d. environment, namely that the environments $\omega(\cdot)$ and $\omega(-\,\cdot)$ are identically distributed. Of course, such symmetry does not hold for environments with only a finite range of dependence.
\begin{proposition}
\label{prop:zero}
Recall $\bar \lambda_j^k$ and $\bar\eta_j$ from \eqref{eq:def-lambda} and \eqref{eq:def-eta}, respectively. When $d\ge 3$, we have
\[
\bar\lambda_j^k=\bar\eta_j=0 \quad
\forall j,k \in \{1,\ldots,d\}.
\]
\end{proposition}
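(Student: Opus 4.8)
The plan is to exploit the reflection symmetry of the i.i.d.\ law directly on the correctors. Let $\iota:\Z^d\to\Z^d$, $\iota(x)=-x$, and define the reflected environment $(\iota\omega)(x):=\omega(-x)$. Since the $\{\omega(x)\}_{x\in\Z^d}$ are i.i.d., the map $\omega\mapsto\iota\omega$ preserves $\mb P$; moreover $a^{\iota\omega}(x)=a^\omega(-x)$ and $\psi(\theta_x(\iota\omega))=\psi(\theta_{-x}\omega)$. The first step is to check how the operator $L_\omega$ transforms under $\iota$: because $\nabla^2$ is symmetric under $x\mapsto -x$, for any $h:\Z^d\to\R$ we have $(L_{\iota\omega}h)(x)=(L_\omega \tilde h)(-x)$ where $\tilde h(y):=h(-y)$. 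Applying this to the defining equations \eqref{eq:def-vk}, \eqref{eq:def-xi}, one sees that $y\mapsto v^k_{\iota\omega}(y)$ and $y\mapsto -v^k_\omega(-y)$ both solve $L_{\iota\omega}(\cdot)=\tfrac12(a^{\iota\omega}_k-\bar a_k)$ on $\Z^d$; wait — more carefully, $v^k_\omega(-\cdot)$ solves $L_{\iota\omega}(v^k_\omega(-\cdot))(x)=\tfrac12(a_k^\omega(-x)-\bar a_k)=\tfrac12(a_k^{\iota\omega}(x)-\bar a_k)$, so by the uniqueness statement in Theorem~\ref{thm:global_krt} (unique up to an additive constant when $d\ge 3$) we get $v^k_{\iota\omega}(x)=v^k_\omega(-x)+\mathrm{const}$. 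The same argument gives $\xi_{\iota\omega}(x)=\xi_\omega(-x)+\mathrm{const}$.

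The second step is to feed this identity into the definition \eqref{eq:def-lambda}. By the stationarity of $\nabla v^k$ (Theorem~\ref{thm:global_krt}\eqref{item:gkrt-4}), $\bar\lambda^k_j=\bar\lambda^k_j(0)=E_{\mb P}[\rho(0)\,a_j(0)(v^k(e_j)-v^k(-e_j))]$, i.e.\ $\bar\lambda^k_j=E_{\mb Q}[a_j(0)(v^k(e_j)-v^k(-e_j))]$ using $\rho=\dd\mb Q/\dd\mb P$. Now replace $\omega$ by $\iota\omega$ in this expectation: since $\iota$ preserves $\mb P$, it suffices to understand how $\mb Q$, i.e.\ $\rho$, transforms. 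Here one must argue that $\rho(\iota\omega)=\rho(\omega)$, which follows from the uniqueness of the invariant measure $\mb Q$ in Theorem~\ref{thm:QCLT}\eqref{item:ergodic} (the density $\rho$ is characterized by an adjoint-stationarity property for $L_\omega$ that is itself reflection-invariant) — this is the one point that needs a short, careful argument. Granting $\rho\circ\iota=\rho$, we compute
\[
\bar\lambda^k_j=E_{\mb Q}\big[a_j^{\iota\omega}(0)\big(v^k_{\iota\omega}(e_j)-v^k_{\iota\omega}(-e_j)\big)\big]
=E_{\mb Q}\big[a_j^\omega(0)\big(v^k_\omega(-e_j)-v^k_\omega(e_j)\big)\big]=-\bar\lambda^k_j,
\]
using $a_j^{\iota\omega}(0)=a_j^\omega(0)$ and $v^k_{\iota\omega}(\pm e_j)=v^k_\omega(\mp e_j)+\mathrm{const}$ (the additive constant cancels in the difference). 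Hence $\bar\lambda^k_j=0$. The identical computation with $\xi$ in place of $v^k$ and $\eta_j$ in place of $\lambda^k_j$ gives $\bar\eta_j=0$.

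The main obstacle I anticipate is the invariance $\rho\circ\iota=\rho$, i.e.\ that the reflection symmetry of $\mb P$ really does descend to the invariant measure $\mb Q$ of the environment-viewed-from-the-particle process. The cleanest route: $\mb Q$ is the unique probability measure $\approx\mb P$ such that $\int L_\omega f\,\dd\mb Q=0$ for all local $f$ (equivalently, $\rho$ satisfies $E_{\mb P}[\rho\, L_\omega f]=0$); apply this identity to the test function $f\circ\iota$ and use the transformation rule for $L_\omega$ from the first step together with $\iota_*\mb P=\mb P$ to conclude that $\rho\circ\iota$ satisfies the same characterizing identity, so $\rho\circ\iota=\rho$ by uniqueness. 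Alternatively one can invoke a known symmetry of the invariant density for balanced i.i.d.\ environments; either way this is the only non-formal step, and everything else is bookkeeping with the uniqueness clause of Theorem~\ref{thm:global_krt} and the change of variables $x\mapsto -x$.
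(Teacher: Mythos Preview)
Your proposal is correct and follows essentially the same reflection-symmetry argument as the paper: define the reflected environment, use the uniqueness clause of Theorem~\ref{thm:global_krt} to identify the corrector of the reflected environment with the reflection of the original corrector (up to an additive constant), and then observe that the resulting expectation equals its own negative. The paper simply asserts $\rho_{\tilde\omega}(0)=\rho_\omega(0)$ without comment, whereas you correctly flag this as the one nontrivial step and supply a clean justification via uniqueness of the invariant measure $\mb Q$ for the environment-viewed-from-the-particle process; in that sense your write-up is slightly more complete than the paper's own proof.
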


\begin{proof}
We will only prove that $\bar\eta_j=0$. The proof of $\bar\lambda_j^k=0$ follows the same lines. 

For $\omega\in\Omega$, let its ``reflection" $\tilde{\omega}=\tilde{\omega}_\omega\in\Omega$ be defined by
\[
\tilde{\omega}(x)=\omega(-x)\quad\text{for } x\in\Z^d.
\]
Then  $\tilde{\omega}$ and $\omega$ have the same distribution under $\mb P$. Note that
\[
\rho_{\tilde{\omega}}(0)=\rho_\omega(0).
\]

Let $\xi=\xi_\omega$ (whose existence is guaranteed by Theorem~\ref{thm:global_krt}) be as defined in \eqref{eq:def-xi}.  Since $\psi$ is a function of $\omega(0)$,  we have $\psi_{\tilde{\omega}}(x)=\psi_\omega(-x)$. Hence,  for $\mb P$-almost all $\omega$, we have
\[
\xi_{\tilde{\omega}}(x)=\xi_\omega(-x)  \quad\forall x\in\Z^d,
\]
up to an additive constant (which may depend on $\omega$).

By the above discussions and the fact that $\nabla\xi$ is a stationary field under $\mb P$, we have for any $j\in \{1,\ldots,d\}$ that
\begin{align*}
\bar\eta_j
&=E_{\mb P}\left[\rho_\omega(0)
a_j^\omega(0)[\xi_\omega(e_j)-\xi_\omega(-e_j)]\right]\\
&=E_{\mb P}\left[\rho_{\tilde\omega}(0)
a_j^{\tilde\omega}(0)[\xi_{\tilde\omega}(-e_j)-\xi_{\tilde\omega}(e_j)]\right]\\
&=-\bar\eta_j,
\end{align*}
and therefore $\bar\eta_j=0$. 
\end{proof}

\section{Bounding $p_j^k$}

In this section, we will compute the bound of a solution $p_j^k$ of \eqref{eq:def-p}. To this end, we construct $p_j^k$ using the ``local Green function" in \cite{GT-23}.

We fix a smooth function $\eta_0\in C^\infty(\R^d)$ that satisfies
\[
\eta_0(\R^d)\subset[0,1], \qquad
\eta_0|_{\R^d\setminus\B_{8/3}}=1,\qquad
\eta_0|_{\B_{7/3}}=0.
\]
For $R>0$, we introduce $\eta=\eta_R :\R^d \rightarrow \R$ given by
\begin{equation}
\label{eq:eta-R}
\eta(x)=\eta_R(x):=\eta_0(\tfrac{x}{R})\quad\text{for } x\in\R^d,
\end{equation}
and we note that 
\[
  \eta|_{\R^d\setminus\B_{3 R}}=1,\qquad \eta|_{\B_{2R}}=0,\qquad 
|D\eta|\lesssim\tfrac1{R},\qquad 
|D^2\eta|\lesssim\tfrac1{R^2}.
\]

 For any fixed $z\in\Z^d$, let $G_R(\cdot,z) = G_R(\cdot,z\,;\,\omega):\Z^d \rightarrow \R$ be the solution to
\[
  L_{\omega}G_R(\cdot,z) = \frac{1}{R^2}\,G_R(\cdot,z)\,\eta - \mathbbm{1}_z\quad\text{in }\Z^d,
\]
where $\mathbbm{1}_z(x) := 1$ if $x = z$, and $\mathbbm{1}_z(x) := 0$ if $x \in \Z^d\setminus\{ z\}$.
Note that $G_R(\cdot, z)$ is $L_\omega$-harmonic on $B_{2R}\setminus\{z\}$, that is, $w(x)=G_R(x,z)$ satisfies

\begin{equation}\label{eq:gr-is-harmonic}
    L_\omega w(x)=0 \quad\text{ for }x\in B_{2R}\setminus\{z\}.
\end{equation}

\begin{lemma}\label{lem:nabla2-gr}
	Let $R>1$ and $d\ge 3$. For $x\in B_R$ and $z\in\Z^d$, we have
	\[
  |\nabla^2G_R(x,z)|\lesssim \mathscr H^{d+1}_{x,z}(|x-z|\wedge R+1)^{-2}\,e^{-c|x-z|/R}\,(|x-z|+1)^{2-d}.
\]
Here $\mathscr H_{x,y}=\mathscr H_x+\mathscr H_y$, and $\mathscr H_x(\omega)=\mathscr H(\theta_x\omega)$ is an exponentially integrable random variable.
\end{lemma}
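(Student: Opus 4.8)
The target is a pointwise second-difference bound on the localized Green function $G_R(\cdot,z)$ for $x\in B_R$, $z\in\Z^d$, with three competing factors: the genuinely elliptic-decay $(|x-z|+1)^{2-d}$, the exponential cutoff $e^{-c|x-z|/R}$, and the regularity gain $(|x-z|\wedge R+1)^{-2}$, all modulated by a stretched-exponentially integrable prefactor $\mathscr H^{d+1}_{x,z}$. The natural route is: first establish a pointwise \emph{size} bound $|G_R(x,z)|\lesssim \mathscr H^{d-1}_{x,z}\,e^{-c|x-z|/R}(|x-z|+1)^{2-d}$ (this is the kind of heat-kernel / Green-function estimate recorded in \cite{GT-23,GT-22}, obtained from the exponential integrability of $\rho$ in \eqref{eq:rho-bounds} together with the parabolic/elliptic Harnack-type bounds for balanced environments), and then upgrade this $C^0$ bound to a $\nabla^2$ bound using the large-scale regularity from Theorem~\ref{thm:c11}. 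The whole point of the factor $(|x-z|\wedge R+1)^{-2}$ is that each order of differentiation costs one power of the relevant length scale, which is $|x-z|$ when $x$ is far from $z$ (so $G_R(\cdot,z)$ is $L_\omega$-harmonic on a ball of that radius around $x$ by \eqref{eq:gr-is-harmonic}) and is $R$ when $x$ is comparable to or beyond the cutoff radius.

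First I would fix $x\in B_R$, $z\in\Z^d$ and set $r:=\tfrac12(|x-z|\wedge R)$ if this is $\ge 1$; the regime $|x-z|\lesssim 1$ is handled separately by a crude local estimate (absorbing the constant into $\mathscr H_{x,z}$), so assume $r\gtrsim 1$. On the ball $B_r(x)$ the function $w=G_R(\cdot,z)$ satisfies $L_\omega w=0$: indeed $B_r(x)\subset B_{2R}\setminus\{z\}$ since $r\le R$ keeps us inside $B_{2R}$ and $r<|x-z|$ keeps $z$ out. Now apply the oscillation estimate \eqref{eq:c2} of Theorem~\ref{thm:c11} on $B_r(x)$ (translating so $x$ plays the role of the origin, which is legitimate by stationarity, replacing $\mathscr H$ by $\mathscr H_x$) with $\psi\equiv 0$, $f\equiv 0$, $j=2$: this gives
\[
|\nabla^2 G_R(x,z)|\;\lesssim\;\Bigl(\tfrac{\mathscr H_x}{r}\Bigr)^{2}\,\bigl\|G_R(\cdot,z)-G_R(x,z)\bigr\|_{1;B_r(x)}\;\lesssim\;\Bigl(\tfrac{\mathscr H_x}{r}\Bigr)^{2}\,\sup_{B_r(x)}\bigl|G_R(\cdot,z)\bigr|.
\]
(One may subtract the constant $G_R(x,z)$ for free; estimating the $L^1$ average by the sup is wasteful but harmless here.) It remains to feed in the $C^0$ bound on $\sup_{B_r(x)}|G_R(\cdot,z)|$.

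The $C^0$ input is: for $y\in B_r(x)$, $|G_R(y,z)|\lesssim \mathscr H^{d-1}_{y,z}\,e^{-c|y-z|/R}(|y-z|+1)^{2-d}$. On $B_r(x)$ we have $|y-z|\asymp|x-z|$ (since $r\le\tfrac12|x-z|$) and $|y-z|/R\ge c|x-z|/R - O(1)$, so $e^{-c|y-z|/R}\lesssim e^{-c'|x-z|/R}$, and $(|y-z|+1)^{2-d}\asymp(|x-z|+1)^{2-d}$. The random prefactors $\mathscr H_{y,z}$ over the $O(r^d)$ points $y\in B_r(x)$ are controlled by a single stretched-exponentially integrable random variable $\mathscr H_{x,z}$ at the cost of an additional polynomial factor in $r$, which is absorbed by raising the exponent of $\mathscr H$ (this is the standard maximal-function trick: $\max_{|y-x|\le r}\mathscr H_{y,z}\le \mathscr H_{x,z}$ for a suitable generic $\mathscr H$, valid precisely because of the stretched-exponential tail — here the exponents $d-1$ from the $C^0$ bound and the extra power from taking the sup combine to something $\le d+1$). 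Substituting, and using $r\asymp |x-z|\wedge R$, gives exactly
\[
|\nabla^2 G_R(x,z)|\;\lesssim\;\mathscr H^{d+1}_{x,z}\,(|x-z|\wedge R+1)^{-2}\,e^{-c|x-z|/R}\,(|x-z|+1)^{2-d}.
\]

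\textbf{The main obstacle.} The delicate point is the regime where $x$ lies near or beyond the cutoff scale, i.e. $|x-z|\gtrsim R$, where the relevant harmonicity radius is $R$ rather than $|x-z|$: one must check that $B_{cR}(x)$ (for small $c$) still sits inside a region where $G_R(\cdot,z)$ is $L_\omega$-harmonic, which works because $x\in B_R$ forces $B_{cR}(x)\subset B_{2R}$, and $|x-z|\gtrsim R$ keeps $z$ out of that ball. Equally delicate is the passage from the $C^0$ bound to the $C^{0,1}$/$C^{1,1}$ bound \emph{with the correct exponential factor}: the regularity estimate is applied on a ball of radius $r\le R/2$, across which $e^{-c|\cdot-z|/R}$ varies by only a bounded factor, so the exponential survives — but one must be careful that the same is true of the polynomial weight, which is why $r$ is taken to be \emph{half} the min, not the min itself. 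I expect the rest (the $|x-z|\lesssim 1$ base case, the translation invariance bookkeeping, and the maximal-function absorption of the $\mathscr H$'s) to be routine given the tools already assembled in the excerpt and in \cite{GT-23,GT-22}.
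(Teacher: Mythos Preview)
Your approach is correct and matches the paper's: apply the $C^{1,1}$ estimate \eqref{eq:c2} on the ball where $G_R(\cdot,z)$ is $L_\omega$-harmonic, then insert the pointwise bound on $G_R$ from \cite[Proposition~36(a)]{GT-23}. The paper streamlines two of your steps---the small-scale case is dispatched via the known gradient bound \eqref{eq:prop36b} together with the trivial discrete inequality $|\nabla^2 G_R|\lesssim|\nabla G_R|$, and since the bound in \cite[Proposition~36(a)]{GT-23} carries only the prefactor $\mathscr H_z^{d-1}$ (independent of the first argument), no maximal-function step over $y\in B_r(x)$ is needed.
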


\begin{proof}
	By \cite[Proposition~36(b)]{GT-23}, we have
	\begin{equation}\label{eq:prop36b}
  |\nabla G_R(x,z)|\lesssim\mathscr H_{x,z}^d(|x-z|\wedge R+1)^{-1}\,e^{-c|x-z|/R}\,(|x-z|+1)^{2-d}.
\end{equation}
Since $|\nabla^2 G_R(x,z)|\lesssim|\nabla G_R(x,z)|$, the statement is true when $|x-z|\wedge R\le \mathscr H_{x,z}$.
It remains to prove the lemma for the case $|x-z|\wedge R>\mathscr H_{x,z}$. 

By \eqref{eq:gr-is-harmonic}, for any $x\in B_R$,  $G_R(\cdot, z)$ is $L_\omega$-harmonic on $B_{|x-z|\wedge R}(x)$.
Hence, when $|x-z|\wedge R>\mathscr H_{x,z}$, we have by the $C^{1,1}$-estimates in \eqref{eq:c2} of Theorem \ref{thm:c11} that 
\begin{align*}
  |\nabla^2 G_R(x,z)|&\lesssim
  \left(\frac{\mathscr H_x}{|x-z|\wedge R}\right)^2\norm{G_R(\cdot, z)}_{1;B_{2R}}\\
&\lesssim \left(\frac{\mathscr H_x}{|x-z|\wedge R}\right)^2\mathscr H_z^{d-1}\,e^{-c|x-z|/R}\,(|x-z|+1)^{2-d},
\end{align*}
where we used \cite[Proposition 36(a)]{GT-23} in the last inequality. The lemma follows.
\end{proof}

Let $\omega'$ denote an independent copy of $\omega$,  and let $\omega_y'\in \Omega$ be the environment defined by 
\begin{equation}\label{eq:def-omegay}
    \omega_y'(x):=\left\{
   \begin{array}{rl}
   	\omega(x) &\text{ if } x\neq y\\
   	\omega'(y)&\text{ if } x=y.
   \end{array}
  \right.
\end{equation}

For any function $\zeta(\omega)$, let 
\[
  \partial_y'\zeta(\omega):=\zeta(\omega_y') - \zeta.
\]

\begin{lemma}\label{lem:u-ver-der}
	If $u(x)=u_\omega(x)$ solves $L_\omega u(x)=\xi(x,\omega)$ for some $\xi:\Z^d \times \Omega \rightarrow \R$, then $\partial'_yu$ solves
	\[
  L_\omega (\partial'_yu)(x)=\partial'_y\xi(x,\omega)-\frac{1}{2}\tr(\partial'_ya(y)\nabla^2u'_y(y))\mathbbm{1}_{y}(x),
\]
where $u_y'(x):=u_{\omega_y'}(x)$, and $\omega_y'\in \Omega$ is as in \eqref{eq:def-omegay}.
\end{lemma}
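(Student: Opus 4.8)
The plan is to differentiate the equation $L_\omega u = \xi(\cdot,\omega)$ in the resampling variable $\omega_y'$ and track exactly which term breaks the naive product/linearity rule. Recall that $L_\omega$ involves the coefficient field $a(\cdot) = a^\omega(\cdot)$, which is \emph{itself} a function of $\omega$, so $\partial'_y$ does not commute with $L_\omega$ in the obvious way: resampling at $y$ changes the coefficient $a(y)$ but leaves $a(x)$ for $x \ne y$ untouched. Writing $u_y' := u_{\omega_y'}$, by definition $L_{\omega_y'} u_y'(x) = \xi(x,\omega_y')$ for all $x$, while $L_\omega u(x) = \xi(x,\omega)$ for all $x$.

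First I would subtract these two identities. For $x \ne y$, the operators agree ($L_{\omega_y'}$ and $L_\omega$ both use $a(x)$ since $\omega_y'(x) = \omega(x)$), so
\[
L_\omega(\partial'_y u)(x) = L_\omega u_y'(x) - L_\omega u(x) = L_{\omega_y'} u_y'(x) - L_\omega u(x) = \xi(x,\omega_y') - \xi(x,\omega) = \partial'_y \xi(x,\omega),
\]
which matches the claimed formula since $\mathbbm{1}_y(x) = 0$ there. For $x = y$, the operator itself changes: $L_{\omega_y'} u_y'(y) = \tfrac12 \tr(a^{\omega_y'}(y)\nabla^2 u_y'(y))$ whereas we want to express things via $L_\omega$. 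Here I would write
\[
L_\omega u_y'(y) = L_{\omega_y'} u_y'(y) + \tfrac12 \tr\big((a^\omega(y) - a^{\omega_y'}(y))\nabla^2 u_y'(y)\big) = \xi(y,\omega_y') - \tfrac12 \tr\big(\partial'_y a(y)\,\nabla^2 u_y'(y)\big),
\]
using $\partial'_y a(y) = a^{\omega_y'}(y) - a^\omega(y)$. Subtracting $L_\omega u(y) = \xi(y,\omega)$ then gives
\[
L_\omega(\partial'_y u)(y) = \partial'_y\xi(y,\omega) - \tfrac12 \tr\big(\partial'_y a(y)\,\nabla^2 u_y'(y)\big),
\]
which is exactly the asserted formula at $x = y$ (the indicator equals $1$). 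Combining the two cases yields the lemma.

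The only real subtlety — and the step I would present most carefully — is the bookkeeping at the single point $x = y$: one must be careful that $\nabla^2 u_y'(y)$ (the Hessian of the \emph{resampled} solution, not of $u$ or of $\partial'_y u$) is the correct object appearing in the correction term, since that is where the coefficient mismatch is evaluated. Everything else is linearity of $L_\omega$ in its argument (for fixed $\omega$) and the fact that $\omega$ and $\omega_y'$ differ only at $y$; there is no analytic difficulty, only the need to keep the two environments and the two solutions $u, u_y'$ cleanly separated. I would also note that well-posedness of the equation for $\partial'_y u$ (existence/uniqueness class) is inherited from that of $u$ and $u_y'$ via the same solvability theory already invoked (e.g.\ the correctors of Theorem~\ref{thm:global_krt} or the ABP/Green-function framework), so no new existence statement is needed here.
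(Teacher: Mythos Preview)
Your proof is correct and is exactly the natural argument: split into the cases $x\neq y$ (where $L_\omega$ and $L_{\omega_y'}$ coincide) and $x=y$ (where the coefficient mismatch produces the extra term), with the key observation that $\partial'_y a(y)=a^{\omega_y'}(y)-a^\omega(y)$ gives the sign in the correction term. The paper in fact states this lemma without proof, so there is nothing further to compare.
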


Let
\begin{equation}\label{eq:p-local}
  p_j^k(x) = -\sum_{z\in \Z^d} G_R(x,z)  \left(\lambda_j^k(z)-\bar{\lambda}_j^k\right).
\end{equation}
We may omit the indices $k,j$ and write $p_j^k, \lambda_j^k, \bar\lambda_j^k$ as $p, \lambda,\bar\lambda$, since our arguments work for all $k,j$. Note that $p$ solves the equation
\[
  L_\omega p=\frac{1}{R^2}p\eta+(\lambda-\bar\lambda) \quad\text{ on }\Z^d.
\]

\begin{proposition}
		\label{prop:size-ep}
		For any $x\in B_R$, we have
	\[
  |p(x)|\lesssim_{\tx_x}\Gamma(R):=\left\{
    \begin{array}{rl}
    	R^{3/2}&\text{ if }d=3,\\
    	R(\log R)^{1/2}&\text{ if }d=4,\\
    	R&\text{ if }d\ge 5.
    \end{array}
  \right.
\]
\end{proposition}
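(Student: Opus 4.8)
The plan is to estimate $p(x)$ directly from the Green-function representation \eqref{eq:p-local}, using the second-difference bound on $G_R$ from Lemma~\ref{lem:nabla2-gr} together with a variance/fluctuation estimate for the random field $\lambda_j^k - \bar\lambda_j^k$. The starting point is that, by Proposition~\ref{prop:zero}, $\bar\lambda_j^k = 0$ for $d\ge 3$, so $p(x) = -\sum_z G_R(x,z)\lambda_j^k(z)$, where $\lambda_j^k(z) = a_j(z)[v^k(z+e_j)-v^k(z-e_j)]$ is, up to bounded stationary coefficients, a first difference of the corrector $v^k$. The quantity $\lambda_j^k(z)$ is \emph{not} a local function (it depends on the whole environment through $v^k$), but it \emph{is} stationary with good moments: by Theorem~\ref{thm:global_krt}\eqref{item:gkrt-3}, $|\lambda_j^k(z)| \lesssim |\nabla v^k(z)| = \mathscr H_z \delta(|z|)$ has stretched-exponential moments, and in $d\ge3$ one has $\delta\equiv 1$.

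First I would split the sum over $z$ according to whether $|z|\le R$ or $|z|>R$ (recall $G_R(x,\cdot)$ decays like $e^{-c|z|/R}(|x-z|+1)^{2-d}$ by summing the bound in Lemma~\ref{lem:nabla2-gr}, or rather using the Green-function size bound \cite[Proposition~36(a)]{GT-23}). The far part $|z|\gtrsim R$ contributes $O(1)$ after using exponential decay against the stretched-exponential growth of $\mathscr H_z$. The core is the near part $\sum_{|z|\le 2R} G_R(x,z)\lambda_j^k(z)$. Here I would use an Efron--Stein / martingale decomposition with respect to the i.i.d.\ coordinates $\{\omega(y)\}_y$: write the sum minus its $\mathbb P$-expectation (which is $0$ since $\bar\lambda=0$ and $\nabla v^k$ is stationary, but one must be careful that the weight $G_R(x,\cdot)$ is also random) as a sum of martingale increments $\partial'_y(\cdot)$, and estimate the $L^2(\mathbb P)$ norm by $\sum_y \mathbb E[|\partial'_y p(x)|^2]$. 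Computing $\partial'_y p(x)$ requires the vertical-derivative identity from Lemma~\ref{lem:u-ver-der}: $\partial'_y$ hits both the Green function $G_R(x,z)$ and the source $\lambda_j^k(z)$ (the latter via $\partial'_y v^k$), producing a contribution localized near $y$ (the $\mathbbm 1_y$ term) plus a nonlocal term controlled by $|\nabla^2 G_R|$ and $|\nabla^2 v^k|$, both of which have been bounded (Lemma~\ref{lem:nabla2-gr} and Theorem~\ref{thm:global_krt}\eqref{item:gkrt-2nd-der}). Summing $\mathbb E[|\partial'_y p(x)|^2]$ over $y$ then gives a convolution-type estimate: the relevant integral is $\sum_{y}\big(\sum_z |\nabla^2 G_R(x,z)|\,(|z-y|+1)^{2-d}\big)^2$-type, which by the Hardy--Littlewood--Sobolev / discrete convolution bounds evaluates to $\Gamma(R)^2$, i.e.\ $R^3$ in $d=3$, $R^2\log R$ in $d=4$, and $R^2$ in $d\ge5$. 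The dimension-dependence enters exactly through the borderline behavior of $\sum_{|z|\le R}(|z|+1)^{4-2d}$ (finite for $d\ge5$, logarithmic for $d=4$, of order $R$ for $d=3$).

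Finally I would upgrade the $L^2(\mathbb P)$ bound to a \emph{quenched} bound $|p(x)|\lesssim_{\mathscr H_x}\Gamma(R)$ with stretched-exponentially integrable prefactor. This is done in the standard way: the Efron--Stein input actually yields concentration (a higher-moment or exponential-moment version of the martingale estimate, as in the concentration machinery underlying \cite{GT-22,GT-23}), and the local large-scale regularity estimate \eqref{eq:c2} of Theorem~\ref{thm:c11} applied to $p$ (which solves $L_\omega p = \frac1{R^2}p\eta + \lambda$, with $\lambda$ a stretched-exponentially bounded source) converts an averaged bound $\|p\|_{1;B_{2R}}\lesssim\Gamma(R)\mathscr H$ into the pointwise bound, at the cost of absorbing the $\frac1{R^2}p\eta$ term (which is harmless on $B_R$ since $\eta\equiv0$ there, and contributes only through the tail of $p$ outside $B_{2R}$, already controlled). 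I expect the main obstacle to be the bookkeeping in the vertical-derivative step: one must track that $\partial'_y$ applied to the representation $p(x)=-\sum_z G_R(x,z)\lambda(z)$ genuinely produces a kernel that is square-summable over $y$ with the right power, and in particular that the \emph{nonlocal} part $\partial'_y\lambda(z)$ (governed by $\partial'_y v^k$, which itself satisfies an equation of the form in Lemma~\ref{lem:u-ver-der} with a point source at $y$, hence decays like a Green function away from $y$) does not destroy the gain; this is precisely where the i.i.d.\ structure and the sharp $|\nabla^2 G_R|$ and $|\nabla^2 v^k|$ bounds must be combined carefully.
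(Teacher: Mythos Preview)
Your Efron--Stein strategy for controlling the \emph{fluctuation} $|p(x)-\mathbb{E}[p(x)]|$ is essentially the paper's Step~5 and is sound. The gap is in the \emph{deterministic} part: you assert that the $\mathbb{P}$-expectation of the sum is zero ``since $\bar\lambda=0$ and $\nabla v^k$ is stationary''. This is incorrect on two counts. First, $\bar\lambda_j^k=0$ from Proposition~\ref{prop:zero} is an identity under the \emph{invariant measure}~$\mathbb{Q}$, i.e.\ $E_{\mathbb{P}}[\rho\lambda]=E_{\mathbb{Q}}[\lambda]=0$; it does not give $E_{\mathbb{P}}[\lambda]=0$. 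Second, even if it did, $G_R(x,z)$ is itself random and correlated with $\lambda(z)$, so one cannot factor $\mathbb{E}[G_R\lambda]$ (you flag this as a caution but do not resolve it). Thus after Efron--Stein you are left with $|\mathbb{E}[p(x)]|$, for which the only a priori bound is $R^2$ from \eqref{eq:papxsize}.

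The paper closes this gap by introducing the stationary approximation $p^{\apx}$ of \eqref{eq:def-pap}: because the environment-from-the-particle process is $\mathbb{Q}$-stationary, one has $E_{\mathbb{Q}}[p^{\apx}]=0$ exactly, so the same variance machinery (transferred to $\mathbb{Q}$ via the moment bounds \eqref{eq:rho-bounds} on $\rho^{\pm1}$) yields $\mathbb{E}[|p^{\apx}|]\lesssim\Gamma(R)$. One then compares $p$ and $p^{\apx}$: their difference solves $L_\omega(p-p^{\apx})=\tfrac1{R^2}(p\eta-p^{\apx})$ with no nonlocal source, giving $\mathbb{E}[|p-p^{\apx}|]\lesssim\Gamma(R)$ and hence $|\mathbb{E}[p(x)]|\lesssim\Gamma(R)$. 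This detour through $\mathbb{Q}$ and $p^{\apx}$ is the missing idea. A smaller point: your final appeal to Theorem~\ref{thm:c11} on $p$ itself does not work as stated, since that theorem requires the source to be a \emph{local} function of $\omega$ and $\lambda$ is not (this is precisely the obstruction noted in Remark~\ref{rmk:compare-p-pap}); in the paper, Lemma~\ref{lem:nabla2-gr} enters instead by giving $|\nabla^2 p|\lesssim(\log R)\tx$ directly from the representation \eqref{eq:p-local}, which then feeds into the point-mass term $\tfrac12\tr(\partial'_ya(y)\nabla^2 p'_y(y))\mathbbm{1}_y$ in the vertical-derivative formula \eqref{eq:dyp-p}.
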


To estimate the size of $p$, we introduce a stationary version of $p$. Let
\begin{equation}\label{eq:def-pap}
  p^\apx(x)=p_\omega^\apx(x)=-\int_0^\infty e^{-s/R^2}E_\omega^x[\lambda(Y_s)-\bar\lambda]\,\dd s
\end{equation}
where $(Y_s)_{s\ge 0}$ is the continuous-time RWRE from Definition~\ref{def:rwre-discrete}. Then $p^\apx$ solves

\begin{equation}\label{eq:lw-p-ap}
    L_\omega p^\apx=\frac{1}{R^2}p^\apx+(\lambda-\bar\lambda) \quad\text{ on }\Z^d.
\end{equation}

Note that, denoting the Green's function $G^\apx=G^\apx_{\omega,R}$ associated to the operator \eqref{eq:lw-p-ap} by 
\begin{equation}\label{eq:def-gapx}
G^\apx(x,y):=\int_0^\infty e^{-s/R^2}p_s^\omega(x,y)\,\dd s, 
\end{equation}
we have a similar formula to \eqref{eq:p-local} for $p^\apx$:
\begin{equation}\label{eq:pap-in-green}
p^\apx(x)=-\sum_{y\in \Z^d}G^\apx(x,y)(\lambda(y)-\bar\lambda).
\end{equation}
Moreover, for $d\ge 3$, we have (for some stretched-exponetially integrable $\tx$)
\begin{equation}\label{eq:green-bound}
    G^\apx(x,y)\lesssim \tx_y\, e^{-c|x-y|/R}(|x-y|+1)^{2-d} \quad\text{ for any }x,y\in\Z^d
\end{equation}
and the same bound holds for $G_R(x,y)$ as well, see \cite[(68) and Proposition 36(a)]{GT-23}.

The function $p^\apx$  is stationary (i.e., $p^\apx(x)$ has the same distribution as $p^\apx(0)$). We claim that there exists a random variable $\tx$ (which may depend on $R$) with $\mb E[\exp(\tx^c)]<C$ such that
\begin{equation}\label{eq:papxsize}
  |p^\apx|\le R^2\tx, \quad |p|\le R^2\tx.
\end{equation}
To this end, observe that by \eqref{eq:pap-in-green}, the definition of $\lambda$ in \eqref{eq:def-lambda}, and Theorem~\ref{thm:global_krt}\eqref{item:gkrt-3},
\begin{equation}\label{eq:pap-bound-by-h}
|p^\apx|\lesssim \sum_{y\in \Z^d}\tx_y\, e^{-c|y|/R}(|y|+1)^{2-d}.
\end{equation}
Recall that the meaning of $\tx$ may differ from line to line.
In fact,  \eqref{eq:pap-bound-by-h} implies that
\begin{equation}
    \label{eq:pap-jensen}
    |p^\apx|\lesssim R^2 \tx.
\end{equation}
Indeed, since $\sum_{y\in \Z^d}e^{-c|y|/R}(|y|+1)^{2-d}\lesssim R^2$, by Jensen's inequality and \eqref{eq:pap-bound-by-h},
\[
\left(\frac C{R^2}|p^\apx|\right)^n\lesssim\frac{1}{R^2}\sum_{y\in \Z^d}e^{-c|y|/R}(|y|+1)^{2-d}\tx_y^n \quad \text{ for any }n\ge 1.
\]
Using the stationarity of the measure $\mb P$ under spatial-shifts, we obtain
\[
\mb E\left[\left(\frac C{R^2}|p^\apx|\right)^n\right]\lesssim\frac{1}{R^2}\sum_{y\in \Z^d}e^{-c|y|/R}(|y|+1)^{2-d}\mb E[\tx_y^n]\asymp\mb E[\tx^n] \quad \text{ for any }n\ge 1.
\]
Therefore $\mb E\bigl[\exp\bigl((\tfrac C{R^2}|p^\apx|)^c\bigr)\bigr]\lesssim\mb E[\exp(\tx^c)]<C'$ and \eqref{eq:pap-jensen} follows. The second bound in \eqref{eq:papxsize} is proved exactly the same way. Claim \eqref{eq:papxsize} is proved.

However, $p^\apx$ does not have the desired regularity. 
We overcome this challenge by using the fact that $p$ has better regularity than $p^\apx$ (although $p$ is only ``almost stationary"), and then comparing $p$ to $p^\apx$.

\begin{remark}\label{rmk:compare-p-pap}
Some comments are in order.
\begin{itemize}
 \item 	Our control of the term $p=p_j^k$ follows a different strategy as the bound of the local corrector $\phi$ in \cite{GT-23}. Recall that, in \cite{GT-23}, we first estimate the approximate corrector $\phi^\apx$ using a sensitivity argument, and then compare $\phi$ to $\phi^\apx$. Such a strategy does not work for reasons explained below.
 \item In \cite[Theorem~17]{GT-23},  $\phi^\apx$ has the desired  $C^{1,1}$-bound because $\psi$ is a {\it local function}. This $C^{1,1}$ bound is crucial in the application of the sensitivity argument (the Efron--Stein inequality). However, for $p^\apx$, the ``source" term $\lambda-\bar\lambda$ is highly non-local, and so the argument above fails. Thus, we are forced to estimate $p$ directly. The control of $p$ consists of two parts:
       \begin{itemize}
  \item the ``random error", i.e., the size of $|p-\mb E[p]|$, which we will achieve using the Efron--Stein inequality;
  \item the ``deterministic error", i.e., the size of $\mb E[p]$, which will be achieved by the triangle inequality $|\mb E[p(x)]|\leq \mb E[|p^\apx|]+\mb E[|p^\apx-p|]$.
\end{itemize} 
For the first estimate, we note the crucial role of the {\it local} Green function $G_R$ in the construction of $p$ as we need its $C^{0,1}$ and $C^{1,1}$ bounds.
For the second estimate, to control $\mb E[|p^\apx|]$, we compare it with $\mb E_\Q[|p^\apx|]$.
And for the term $\mb E[|p^\apx-p|]$, we again use $G_R$ to get the desired bound.
\end{itemize}

\end{remark}

\begin{proof}[Proof of Proposition~\ref{prop:size-ep}]
As explained in Remark~\ref{rmk:compare-p-pap}, we will compare the $C^{1,1}$ regularity of $p^\apx$ to that of $p$. The $C^{1,1}$ bounds will later be used in the sensitivity estimates of $p$ and $p^\apx$. 
The idea is to exploit both the stationarity of $p^\apx$ and the good regularity of $p$.
We divide the proof into several steps.

{\bf Step 1.} By the discrete Krylov--Safonov H\"older estimate (\cite[Corollary
4.6]{KT-96}) and \eqref{eq:papxsize}, we have for $1\le r<R$ that
\begin{align*}
  \osc_{B_r}p^\apx\lesssim \left(\frac{r}{R}\right)^\gamma\left[\norm{p^\apx}_{1,B_R}+R^2\left\|\frac{1}{R^2}p^\apx+(\lambda-\bar\lambda)\right\|_{d,B_R}\right]\stackrel{\eqref{eq:papxsize}}\lesssim \left(\frac{r}{R}\right)^\gamma R^2\tx,
\end{align*}
where $\gamma>0$ depends only on $d$ and the ellipticity constant. Hence, 
\[
  [p^\apx]_{\gamma;B_{R/2}}\lesssim R^{2-\gamma}\tx.
\]
Similarly, we find that $[p]_{\gamma;B_{R/2}}\lesssim R^{2-\gamma}\tx$.

{\bf Step 2.} Note that $p-p^\apx$ solves
\[
  L_\omega (p-p^\apx)=\frac{1}{R^2}(p\eta-p^\apx).
\]
Here, we consider the difference $p-p^\apx$ to remove the nonlocal source term $\lambda-\bar\lambda$.
 By the $C^{1,1}$ estimate \eqref{eq:c2} with $u=p-p^\apx$, $\psi\equiv 0$, $\sigma=\gamma$, and $f=\frac{1}{R^2}(p\eta-p^\apx)$, we obtain
\begin{equation}\label{eq:nabla2p-1}
  |\nabla^2 (p-p^\apx)|
  \lesssim \left(\tfrac{\tx}{R}\right)^2 \left(R^2+R^{2+\gamma}\left[\frac{1}{R^2}(p\eta-p^\apx)\right]_{\gamma;B_R}\right)\lesssim\tx^2.
\end{equation}
Furthermore, in view of \eqref{eq:p-local}, we have for $x\in B_R$ that
\[
        \nabla^2 p(x) = -\sum_{z\in \Z^d} \nabla^2G_R(x,z) \left(\lambda(z)-\bar{\lambda}\right),
\]
and hence, by Lemma~\ref{lem:nabla2-gr} {(and the argument that leads \eqref{eq:pap-bound-by-h} to \eqref{eq:pap-jensen}),}
 \begin{equation}\label{eq:nabla2p-2}
   |\nabla^2 p(x)|\lesssim \tx_x\sum_{z\in\Z^d}(|z|\wedge R+1)^{-2}e^{-c|z|/R}(|z|+1)^{2-d}\lesssim (\log R) \tx_x.
\end{equation}

We note that this argument for $\nabla^2p$ does not work for $\nabla^2 p^\apx$ because the Green function $G^\apx$ does not have as good regularity as $G_R$.

Combining \eqref{eq:nabla2p-1} and \eqref{eq:nabla2p-2}, we obtain $|\nabla^2 p^\apx(x)|\lesssim (\log R)\tx_x$ for $x\in B_R$. Since $(\nabla^2p^\apx(x))_{x\in\Z^d}$ is stationary, we conclude that, for all $x\in\Z^d$,
\begin{equation}\label{eq:c2bound-p-ap}
    |\nabla^2 p^\apx(x)|\lesssim (\log R)\tx_x.
\end{equation}
This yields that $p^\apx_\omega$ has a close to optimal $C^{1,1}$ regularity.
However, its $C^{0,1}$ bound is not as good as we want.
 
{\bf Step 3.} Next, we deduce a formula for $\partial'_y p^\apx$. By \eqref{eq:lw-p-ap} and Lemma~\ref{lem:u-ver-der},
\begin{equation}\label{eq:250707-1}
    L_\omega(\partial'_yp^\apx)=\frac{1}{R^2}\partial'_yp^\apx +\partial'_y\lambda -\frac{1}{2}\tr(\partial'_ya(y)\nabla^2 p_y^{\apx'} (y))\mathbbm{1}_{y},
\end{equation}
where $p_y^{\apx'}:=p^\apx_{\omega'_y}$ is as in Lemma~\ref{lem:u-ver-der}.
Let us estimate $\partial_y'\lambda$. Using the product rule for $\partial_y'$, and writing $\tilde{v}^k := v^k + v^k(\cdot - e_j)$, we have 
\begin{align}\label{eq:250707-2}
\partial_y' \lambda = \partial_y' ( a_j\nabla_{e_j}\tilde v^k) =a_j^{\omega_y'}\, \nabla_{e_j}\partial_y' \tilde v^k + \mathbbm{1}_y\, \partial_y' a_j \,  \nabla_{e_j} \tilde v^k.
\end{align}
Hence, by \eqref{eq:250707-1} and \eqref{eq:250707-2},
\begin{align}\label{eq: Lom}
   L_\omega(\partial'_yp^\apx)=\frac{1}{R^2}\partial'_yp^\apx +a_j^{\omega_y'}\, \nabla_{e_j}\partial_y' \tilde v^k- \left(\frac{1}{2}\tr(\partial'_ya\,\nabla^2 p_y^{\apx'})-\partial_y' a_j \nabla_{e_j} \tilde v^k\right)\mathbbm{1}_y. 
\end{align}
Notice that the function $G^\apx$ defined in \eqref{eq:def-gapx} satisfies
\[
  L_\omega G^\apx(\cdot,z)=\frac{1}{R^2}G^\apx(\cdot,z)-\mathbbm{1}_{z}\quad\text{in }\Z^d
\]
 for any fixed $z\in \Z^d$. Then, we see from \eqref{eq: Lom} that
\begin{align}\label{eq:formula-p-ap}
  \partial'_yp^\apx(x)&=
  G^\apx(x,y)\left(\frac{1}{2}\tr(\partial'_ya(y)\,\nabla^2 p_y^{\apx'} (y))-\partial_y' a_j(y)\,  \nabla_{e_j} \tilde v^k(y)\right)\nn\\
  &\qquad-\sum_{z\in\Z^d}G^\apx(x,z)\,a_j^{\omega_y'}(z)\, \nabla_{e_j}\partial_y' \tilde v^k(z).
\end{align}
Note that, by the inequality below \cite[(94)]{GT-23},
\begin{align*}
\lvert \nabla_{e_j} \partial'_y \tilde v^k(z) \rvert \lesssim \tx_y'^2 \tx_{y,z}^d\, e^{-c|z-y|/R}\, (\lvert z-y\rvert+1)^{2-d}\, (|z-y|\wedge R+1)^{-1} .
\end{align*}
This inequality, together with \eqref{eq:formula-p-ap}, \eqref{eq:c2bound-p-ap}, and \cite[(68)]{GT-23}, yields
\begin{align}\label{eq:partialpap}
  &|\partial'_yp^\apx(x)|\lesssim_{\tx_{x,y}} 
  e^{-c|x-y|/R}(|x-y|+1)^{2-d}(\log R)\\
  &\;+\sum_{z\in \Z^d} e^{-c(|x-z|+|z-y|)/R}(|x-z|+1)^{2-d}(|z-y|+1)^{2-d}(|z-y|\wedge R+1)^{-1}.\nn
\end{align}
In particular,
\begin{align}\label{T(R) def}
    \sum_{y\in \Z^d}|\partial_y' p^\apx(0)|^2
  &\lesssim_\tx (\log R)^2\sum_{y\in \Z^d} e^{-2c|y|/R}(|y|+1)^{4-2d} \nn \\ &\quad+  \sum_{y\in \Z^d}\left\lvert\sum_{z\in \Z^d}  e^{-c(|z|+|z-y|)/R}    (|z|+1)^{2-d} \frac{(|z-y|+1)^{2-d}}{|z-y|\wedge R+1}\right\rvert^2 \nn \\ &\lesssim_\tx \Gamma(R)^2 +  T(R)^2,
\end{align}
where $\Gamma(R)$ is defined as in the statement of the theorem, and
\begin{align*}
    T(R) := \left(\int_{\R^d}\left\lvert\int_{\R^d}  g_R(x,y)\, \dd x\right\rvert^2 \dd y\right)^{1/2}
\end{align*}
with
\begin{align*}
g_R(x,y) :=e^{-c(|x+y|+|x|)/R}    (|x+y|+1)^{2-d} (|x|+1)^{2-d} (|x|\wedge R+1)^{-1}.
\end{align*}

{\bf Step 4.} 
We claim that
\begin{align}\label{T(R) bd}
  T(R)\lesssim \Gamma(R).
\end{align}
First, noting that $\int_{\B_R} (|x|+1)^{1-d} \dd x \lesssim R$ and using H\"older's inequality, we see that
\begin{align*}
    \int_{\B_{2R}}\left\lvert\int_{\B_{R}}  g_R(x,y)\, \dd x\right\rvert^2 \dd y &\leq 
    \int_{\B_{2R}}\left\lvert\int_{\B_{R}}  (|x+y|+1)^{2-d}(|x|+1)^{1-d}\, \dd x\right\rvert^2 \dd y \\
    &\lesssim R \int_{\B_{2R}} \int_{\B_{R}}  (|x+y|+1)^{4-2d}(|x|+1)^{1-d}\, \dd x\, \dd y \\
    &\lesssim R \int_{\B_{R}} (|x|+1)^{1-d} \int_{\B_{3R}}(|z|+1)^{4-2d} \, \dd z\, \dd x \\
    &\lesssim R^2 \int_0^{3R} (r+1)^{4-2d} r^{d-1}\, \dd r \lesssim \Gamma(R)^2.
\end{align*}
Second, using that $|x+y|\geq |y|-|x|\geq R$ for $(x,y)\in \B_R\times (\R^d \setminus \B_{2R})$, we obtain
\begin{align*}
    &\int_{\R^d\setminus \B_{2R}}\left\lvert\int_{\B_{R}}  g_R(x,y)\, \dd x\right\rvert^2 \dd y \leq R^{4-2d} 
    \int_{\R^d\setminus \B_{2R}}\left\lvert\int_{\B_{R}}  e^{-c|x+y|/R}(|x|+1)^{1-d}\, \dd x\right\rvert^2 \dd y
    \\ 
    &\leq R^{4-2d} \int_{\R^d\setminus \B_{2R}} e^{-2c |y| /R} \left\lvert\int_{\B_{R}}  e^{c|x|/R}(|x|+1)^{1-d}\, \dd x\right\rvert^2 \dd y \\
    &\lesssim R^{4-2d} \left(\int_{2R}^{\infty} e^{-2c r /R} r^{d-1} \,\dd r\right) \left\lvert\int_{0}^R  e^{cr/R}(r+1)^{1-d}r^{d-1}\, \dd r\right\rvert^2  \lesssim R^{6-d} \lesssim \Gamma(R)^2. 
\end{align*}
Third, using that $|x+y|\geq |x|-|y|\geq R/2$ for $(x,y)\in (\R^d \setminus \B_{R}) \times \B_{R/2}$, we obtain
\begin{align*}
    &\int_{\B_{R/2}}\left\lvert\int_{\R^d \setminus \B_{R}}  g_R(x,y)\, \dd x\right\rvert^2 \dd y \\ &\lesssim  R^{4-2d}R^{-2} \int_{\B_{R/2}}\left\lvert\int_{\R^d \setminus \B_{R}}  e^{-c(|x+y|+|x|)/R}(|x|+1)^{2-d}\, \dd x\right\rvert^2 \dd y \\
    &\lesssim R^{2-2d}\int_{\B_{R/2}} e^{ 2c |y|/R}\left\lvert\int_{\R^d \setminus \B_{R}}  e^{-2c|x|/R}(|x|+1)^{2-d}\, \dd x\right\rvert^2 \dd y \\
    &\lesssim R^{2-2d} \left(\int_0^{R/2} e^{ 2c r/R}r^{d-1}\,\dd r\right) \left\lvert \int_R^{\infty} e^{-2cr/R}(r+1)^{2-d} r^{d-1}\,\dd r\right\rvert^2 \lesssim R^{6-d} \lesssim \Gamma(R)^2.
\end{align*}
Finally, noting that $\int_{\R^d\setminus \B_R} e^{-c|x|/R}\dd x \lesssim R^d$ and using H\"older's inequality, we find 
\begin{align*}
    &\int_{\R^d\setminus \B_{R/2}}\left\lvert\int_{\R^d \setminus \B_{R}}  g_R(x,y)\, \dd x\right\rvert^2 \dd y \\
    &\lesssim R^{4-2d} R^{-2} \int_{\R^d\setminus \B_{R/2}}\left\lvert\int_{\R^d \setminus \B_{R}}  e^{-c(|x+y|+|x|)/R}    (|x+y|+1)^{2-d}\, \dd x\right\rvert^2 \dd y \\
    &\lesssim R^{2-2d} R^d \int_{\R^d\setminus \B_{R/2}}\int_{\R^d \setminus \B_{R}}  e^{-c(2|x+y|+|x|)/R}    (|x+y|+1)^{4-2d}\, \dd x\, \dd y \\
    &\lesssim R^{2-d} \int_{\R^d \setminus \B_{R}} e^{-c|x|/R} \int_{\R^d} e^{-2c|z|/R}    (|z|+1)^{4-2d} \,\dd z \,\dd x \\
    &\lesssim R^2 \int_0^{\infty}  e^{-2cr/R}    (r+1)^{4-2d} r^{d-1} \,\dd r \lesssim \Gamma(R)^2.
\end{align*}
Altogether, we obtain the claimed bound \eqref{T(R) bd}, and hence,
\begin{align*}
    \sum_{y\in \Z^d}|\partial_y' p^\apx(0)|^2
  \lesssim_\tx \Gamma(R)^2 + T(R)^2 \lesssim_\tx \Gamma(R)^2.
\end{align*}

{\bf Step 5.} 
 Using an argument similar to \eqref{eq:formula-p-ap}, we find
\begin{align}\label{eq:dyp-p}
  \partial'_yp(x)&=G_R(x,y)\left(\frac{1}{2}\tr(\partial'_ya(y)\nabla^2 p_y' (y))-\partial_y' a_j (y)\,  \nabla_{e_j} \tilde v^k (y)\right)\nn\\
  &\qquad-\sum_{z\in\Z^d}G_R(x,z)\,a_j^{\omega_y'}(z)\, \nabla_{e_j}\partial_y' \tilde v^k(z)
\end{align}
for $x\in B_R$. Then, using \eqref{eq:nabla2p-2}, arguments similar to steps 3 and 4, and \cite[Proposition 36]{GT-23}, we obtain for any $x\in B_R$ that
\begin{equation}
  \label{eq:ver-der-p}
  \sum_{y\in \Z^d}|\partial_y' p(x)|^2
  \lesssim_\tx\Gamma(R)^2.
\end{equation}
By the $L_p$ version of the Efron--Stein inequality \cite[(38)]{GT-23},  for any $q\ge 2$, 
\begin{align}
  \mathbb{E}\left[ \lvert p(x) - \mathbb{E} p(x)\rvert^q \right] &\lesssim q^{q/2} \mathbb{E}\left[V^{q/2}(p(x))\right]\lesssim 
  q^{q/2}\Gamma(R)^q \quad\forall x\in B_R,\label{eq:var-p}
\\
    \mb E[(p^\apx-\mb Ep^\apx)^q]&\lesssim q^{q/2}\Gamma(R)^q,\label{eq:var-pap}
\end{align}
where $V(p(x)) := \sum_{y\in \Z^d} \lvert \partial_y' p(x)\rvert^2$.

{\bf Step 6.} Observe that $E_\Q p^\apx=0$. 
Hence, we have for the $\mb Q$-variance that
\begin{align*}\label{eq:liaoning-0712}
 E_\Q[(p^\apx)^2]&\le E_\Q[(p^\apx)^2]+\mb E[p^\apx]^2=E_\Q[(p^\apx-\mb Ep^\apx)^2]\\
 &\lesssim \mb E[(p^\apx-\mb Ep^\apx)^{3}]^{2/3}\nn\\
 &\stackrel{\eqref{eq:var-pap}}\lesssim\Gamma(R)^2,
\end{align*}
where in the second inequality, we used H\"older's inequality and the exponential integrability of $\rho=\dd\mb Q/\dd\mb P$ in \eqref{eq:rho-bounds}. 
By H\"older's inequality and a moment bound of $\rho^{-1}=\dd\mb P/\dd\mb Q$, we obtain
\begin{equation}\label{eq:ebound-apap}
  \mb E[|p^\apx|]\lesssim (E_\Q[(p^\apx)^{2}])^{1/2}\lesssim\Gamma(R).
  \end{equation}

{\bf Step 7.} 
Let $u=p^\apx-p$. Then $u$ solves the equation
\[
  L_\omega u=\frac{1}{R^2}u\eta+\frac{1}{R^2}p^\apx(1-\eta).
\]
 Hence, we have for $x\in\Z^d$ that
\begin{align*}
|u(x)|&=\frac{1}{R^2}\Abs{\sum_{y\in B_{3R}}G_R(x,y)p^\apx(y)(1-\eta(y))}\\
&\lesssim_\tx\frac{1}{R^2}\sum_{y\in B_{3R}}e^{-c|x-y|/R}(|x-y|+1)^{2-d}|p^\apx(y)|.
\end{align*}
Using this inequality and \eqref{eq:ebound-apap}, we 
obtain $\mb E[|u(x)|]\lesssim\Gamma(R)$.
Therefore, using \eqref{eq:ebound-apap} again, we obtain for any $x\in\Z^d$ that
\begin{equation}\label{eq:bound-expec-p}
  |\mb E[p(x)]|\lesssim \mb E[|p^\apx|]+\mb E[|u(x)|]\lesssim\Gamma(R).
\end{equation}

{\bf Step 8.}
 The bound \eqref{eq:var-p}, together with \eqref{eq:bound-expec-p}, implies that, for any $x\in B_R$, 
\[
  |p(x)-\mb E[p(x)]|\le \Gamma(R) \tx_x
\]
for a stretched exponentially integrable variable $\tx$.
This inequality, together with \eqref{eq:bound-expec-p}, yields the desired result.
\end{proof}

\section{Bounding $\nabla p_j^k$}
In this section, we will compute the bound of $\nabla p_j^k$ where $p_j^k$ is defined in \eqref{eq:p-local}.
As in the previous section, we omit the indices $k,j$ and write $p_j^k, \lambda_j^k, \bar\lambda_j^k$ as $p, \lambda,\bar\lambda$, respectively, since our arguments work for all $k,j$.

\begin{proposition} For any $x\in B_R$, we have
	\label{prop:grad-p}
\[
  |\nabla p(x)|\lesssim_{\tx_x}\widetilde\Gamma(R):=\left\{
    \begin{array}{rl}
    	R^{1/2}&\text{ if }d=3,\\
    	\log R&\text{ if }d\geq 4.
    \end{array}
  \right.
\]
\end{proposition}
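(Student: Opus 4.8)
The plan is to follow the same two‑part scheme as in the proof of Proposition~\ref{prop:size-ep}. For a fixed $x\in B_R$ I would write
\[
|\nabla p(x)|\le|\nabla p(x)-\mb E[\nabla p(x)]|+|\mb E[\nabla p(x)]|,
\]
controlling the ``random error'' $|\nabla p(x)-\mb E[\nabla p(x)]|$ by the Efron--Stein inequality and the ``deterministic error'' $|\mb E[\nabla p(x)]|$ by comparison with the stationary object $p^\apx$. The one genuinely new ingredient over the proof of Proposition~\ref{prop:size-ep} is that differentiating the local Green function in its first argument gains a decay factor: by \eqref{eq:prop36b},
\[
|\nabla G_R(x,z)|\lesssim\tx_{x,z}^d\,(|x-z|\wedge R+1)^{-1}e^{-c|x-z|/R}(|x-z|+1)^{2-d},
\]
so every kernel appearing in the Green's‑function and vertical‑derivative representations of $\nabla p$ carries one extra factor $(|x-z|\wedge R+1)^{-1}$ relative to the corresponding kernel for $p$ itself, and it is exactly this gain that upgrades $\Gamma(R)$ to $\widetilde\Gamma(R)=\Gamma(R)/R$ (for $d=3$), resp.\ $\widetilde\Gamma(R)=\log R$ (for $d\ge4$).

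\emph{Deterministic error.} Since the field $\{p^\apx(x)\}_{x\in\Z^d}$ is stationary, $\mb E[p^\apx(x)]$ is independent of $x$, so $\mb E[\nabla p^\apx(x)]=0$, and hence $\mb E[\nabla p(x)]=-\mb E[\nabla u(x)]$ with $u:=p^\apx-p$. Recalling from Step~7 of the proof of Proposition~\ref{prop:size-ep} that $u$ solves $L_\omega u=\tfrac{1}{R^2}u\eta+\tfrac{1}{R^2}p^\apx(1-\eta)$, one has $\nabla u(x)=\tfrac{1}{R^2}\sum_{y\in B_{3R}}\nabla G_R(x,y)\,p^\apx(y)(1-\eta(y))$. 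Inserting the gradient bound above, using $\mb E[|p^\apx|^2]\lesssim\Gamma(R)^2$ (which follows from \eqref{eq:var-pap} and \eqref{eq:ebound-apap}) together with H\"older's inequality to absorb the $\tx$‑factors, and the elementary estimate $\sum_{y\in\Z^d}(|y|\wedge R+1)^{-1}e^{-c|y|/R}(|y|+1)^{2-d}\lesssim R$, I obtain
\[
\mb E[|\nabla u(x)|]\lesssim\tfrac{1}{R^2}\cdot R\cdot\Gamma(R)=\tfrac{\Gamma(R)}{R}\lesssim\widetilde\Gamma(R),
\]
and therefore $|\mb E[\nabla p(x)]|\lesssim\widetilde\Gamma(R)$.

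\emph{Random error.} Since $\nabla$ acts on the spatial variable and $\partial_y'$ on the environment, the two operations commute, so differentiating \eqref{eq:dyp-p} in $x$ gives
\[
\nabla\partial_y'p(x)=\nabla G_R(x,y)\Big(\tfrac{1}{2}\tr\big(\partial_y'a(y)\nabla^2p_y'(y)\big)-\partial_y'a_j(y)\,\nabla_{e_j}\tilde v^k(y)\Big)-\sum_{z\in\Z^d}\nabla G_R(x,z)\,a_j^{\omega_y'}(z)\,\nabla_{e_j}\partial_y'\tilde v^k(z).
\]
I would then estimate the factors exactly as in Steps~3--4 of the proof of Proposition~\ref{prop:size-ep}: $|\nabla^2p_y'(y)|\lesssim(\log R)\tx$ by \eqref{eq:nabla2p-2} (for $y$ far from $x$ the factor $\nabla G_R(x,y)$ is exponentially small, so a crude polynomial bound on $\nabla^2p_y'(y)$ suffices there); $|\partial_y'a_j(y)\,\nabla_{e_j}\tilde v^k(y)|\lesssim\tx$ by Theorem~\ref{thm:global_krt}\,\eqref{item:gkrt-3} (recall $\delta\equiv1$ for $d\ge3$); $|a_j^{\omega_y'}(z)|\lesssim1$; and $|\nabla_{e_j}\partial_y'\tilde v^k(z)|\lesssim_\tx e^{-c|z-y|/R}(|z-y|+1)^{2-d}(|z-y|\wedge R+1)^{-1}$ by the inequality below \cite[(94)]{GT-23}. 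Summing over $y$ then yields
\[
\sum_{y\in\Z^d}|\nabla\partial_y'p(x)|^2\lesssim_\tx(\log R)^2\sum_{y\in\Z^d}(|y|\wedge R+1)^{-2}e^{-2c|y|/R}(|y|+1)^{4-2d}+\|h\ast h\|_{\ell^2(\Z^d)}^2,
\]
where $h(v):=(|v|\wedge R+1)^{-1}e^{-c|v|/R}(|v|+1)^{2-d}$. The first sum is $\lesssim1$ in every dimension $d\ge3$. For the second, a region‑by‑region estimate in the spirit of Step~4 — using that $h(v)\asymp(|v|+1)^{1-d}$ for $|v|\le R$, so that its self‑convolution is $\asymp(|v|+1)^{2-d}$ there, and that the contribution of $|y|>R$ is exponentially small — gives $\|h\ast h\|_{\ell^2}^2\lesssim\sum_{|y|\le R}(|y|+1)^{4-2d}\lesssim\widetilde\Gamma(R)^2$ (in fact $\asymp R$ for $d=3$, $\asymp\log R$ for $d=4$, $\asymp1$ for $d\ge5$). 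Since $(\log R)^2\lesssim\widetilde\Gamma(R)^2$ for all $d\ge3$, this gives $\sum_y|\nabla\partial_y'p(x)|^2\lesssim_\tx\widetilde\Gamma(R)^2$ (the convolution term dominates when $d=3$, the $(\log R)^2$ term when $d\ge4$). Feeding this into the $L^q$ Efron--Stein inequality \cite[(38)]{GT-23} exactly as in \eqref{eq:var-p} yields $\mb E[|\nabla p(x)-\mb E\nabla p(x)|^q]\lesssim q^{q/2}\widetilde\Gamma(R)^q$, hence $|\nabla p(x)-\mb E[\nabla p(x)]|\lesssim\widetilde\Gamma(R)\,\tx_x$; combined with the deterministic bound this finishes the proof.

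\emph{Main obstacle.} The only genuinely technical point is the $\ell^2$‑estimate of the two‑fold convolution $h\ast h$: one must check, dimension by dimension, that the extra singular factor $(|v|\wedge R+1)^{-1}$ carried by $\nabla G_R$ still leaves a convergent sum, and that the logarithmic factors come out sharply — in particular $\log R$ rather than $(\log R)^{3/2}$ when $d=4$, which requires the region‑splitting of Step~4 rather than a crude Young's inequality. Everything else is a direct adaptation of the apparatus already built for $p$; in fact the extra decay makes these sums better behaved than the analogous quantity $T(R)$ appearing in the previous section.
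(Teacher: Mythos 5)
Your proposal is correct and, for the random-error part, follows the paper's proof essentially step by step: differentiate \eqref{eq:dyp-p} in $x$, use \eqref{eq:prop36b} to gain the extra factor $(|x-z|\wedge R+1)^{-1}$ in the kernels, bound $\sum_y|\partial'_y\nabla p(x)|^2$ by a region-by-region estimate of the resulting self-convolution, and feed the result into the $L^q$ Efron--Stein inequality; this matches the paper's Steps 1--3 in content, including the awareness that a crude Young-inequality bound on the convolution would give the suboptimal $(\log R)^3$ for $d=4$.

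Where you genuinely diverge is the deterministic bound $|\mb E[\nabla p(x)]|\lesssim\widetilde\Gamma(R)$. The paper disposes of it in one line by invoking \cite[Lemma~40]{GT-23} with $\mu(R)$ replaced by $\Gamma(R)$. You instead derive it from first principles: since $p^\apx$ is a stationary field, $\mb E[p^\apx(\cdot)]$ is constant and so $\mb E[\nabla p^\apx(x)]=0$; hence $\mb E[\nabla p(x)]=-\mb E[\nabla u(x)]$ with $u=p^\apx-p$, which (from Step 7 of the proof of Proposition~\ref{prop:size-ep}) admits the Green's-function representation $u(x)=-\tfrac{1}{R^2}\sum_{y\in B_{3R}}G_R(x,y)p^\apx(y)(1-\eta(y))$. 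Applying $\nabla_x$, inserting \eqref{eq:prop36b}, absorbing the $\tx$-weights by H\"older against the (stationary) second moment $\mb E[|p^\apx|^2]\lesssim\Gamma(R)^2$, and using $\sum_y(|y|\wedge R+1)^{-1}e^{-c|y|/R}(|y|+1)^{2-d}\lesssim R$ gives $\mb E[|\nabla u(x)|]\lesssim\Gamma(R)/R\lesssim\widetilde\Gamma(R)$. This is a self-contained and transparent substitute for the black-box citation, buying you independence from the specific statement of Lemma~40 of \cite{GT-23} at the cost of redoing a small computation already present in spirit in Step~7 of the previous proof. One small presentational caveat: your phrase that the $|y|>R$ contribution to $\|h\ast h\|_{\ell^2}^2$ is ``exponentially small'' is imprecise — near $|y|\asymp R$ the exponential factor is only $O(1)$ — but the region-by-region computation you gesture at (and which the paper carries out with $\tilde g_R$) does control it, so the conclusion stands.
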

\begin{proof}
Again, we divide the proof into several steps.

	{\bf Step 1.} In view of \eqref{eq:dyp-p}, we have that
\begin{align}\label{eq:250708-1}
  \partial'_y\nabla p(x)&=\nabla_xG_R(x,y)\left(\frac{1}{2}\tr(\partial'_ya(y)\nabla^2 p_y' (y))-\partial_y' a_j (y)\,  \nabla_{e_j} \tilde v^k (y)\right)\nn\\
  &\qquad-\sum_{z\in\Z^d}\nabla_x G_R(x,z)\,a_j^{\omega_y'}(z)\, \nabla_{e_j}\partial_y' \tilde v^k(z),
\end{align}
where the subscript of $\nabla_x$ indicates that $\nabla$ is applied only to the variable $x$. For $x\in B_R$, by \eqref{eq:prop36b}, \eqref{eq:250708-1}, and the same arguments as in \eqref{eq:partialpap}, we obtain
\begin{align*}
|\partial'_y\nabla p(x)|&\lesssim_{\tx_{x,y}}
e^{-c|x-y|/R} \frac{(|x-y|+1)^{2-d}}{|x-y|\wedge R+1} (\log R)\\
\MoveEqLeft+\sum_{z\in \Z^d} e^{-c(|x-z|+|z-y|)/R}\frac{(|x-z|+1)^{2-d}}{|x-z|\wedge R+1}\frac{(|z-y|+1)^{2-d}}{|z-y|\wedge R+1}.\nn
\end{align*}

{\bf Step 2.} In particular, in view of \eqref{T(R) def} and \eqref{T(R) bd}, we have the bound
\begin{align*}
    &\sum_{y\in \Z^d}|\partial_y' \nabla p(0)|^2
  \lesssim_\tx (\log R)^2\sum_{y\in \Z^d} e^{-2c|y|/R}\frac{(|y|+1)^{4-2d}}{(|y|\wedge R + 1)^2} \\ &\qquad\qquad\qquad\qquad+ \sum_{y\in \Z^d}\left\lvert\sum_{z\in \Z^d}  e^{-c(|z|+|z-y|)/R}\frac{(|z|+1)^{2-d}}{|z|\wedge R+1}\frac{(|z-y|+1)^{2-d}}{|z-y|\wedge R+1}\right\rvert^2 \\ 
  &\lesssim_\tx (\log R)^2 + \sum_{y\in \Z^d}\left\lvert\sum_{z\in B_R}  e^{-c(|z|+|z-y|)/R}(|z|+1)^{1-d}\frac{(|z-y|+1)^{2-d}}{|z-y|\wedge R+1}\right\rvert^2 + \frac{1}{R^2} \Gamma(R)^2
  \\&\lesssim_\tx  (\log R)^2 + \int_{\R^d}\left\lvert\int_{\B_R}  \tilde g_R(y,z)\, \dd z\right\rvert^2 \dd y + \frac{1}{R^2} \Gamma(R)^2,
\end{align*}
where
\begin{align*}
\tilde g_R(y,z) :=e^{-c(|z|+|z-y|)/R}(|z|+1)^{1-d}(|z-y|+1)^{2-d}(|z-y|\wedge R+1)^{-1}.
\end{align*}
First, noting that $\int_{\B_{R}} (|x|+1)^{1-d} \dd x \lesssim R$ and $\int_{\B_{3R}\setminus \B_R} (|x|+1)^{2-d} \dd x \lesssim R^2$, we obtain  
\begin{align*}
    &\int_{\B_{2R}}\left\lvert\int_{\B_{R}}  \tilde g_R(y,z)\, \dd z\right\rvert^2 \dd y \leq \int_{\B_{2R}}\left\lvert\int_{\B_{3R}}  \frac{(|x|+1)^{2-d}}{|x|\wedge R+1}\,(|x+y|+1)^{1-d}\, \dd x\right\rvert^2 \dd y \\
    &\lesssim \int_{\B_{2R}}\left\lvert\int_{\B_{R}} (|x|+1)^{1-d}\, (|x+y|+1)^{1-d}\,  \dd x\right\rvert^2 \dd y \\ &\qquad+ \frac{1}{R^2} \int_{\B_{2R}}\left\lvert\int_{\B_{3R}\setminus \B_R} (|x|+1)^{2-d}\, (|x+y|+1)^{1-d}\,  \dd x\right\rvert^2 \dd y \\
    &\lesssim R\int_{\B_{2R}} \int_{\B_{R}}  (|x|+1)^{1-d}\,(|x+y|+1)^{2-2d}\,  \dd x \, \dd y \\ &\qquad+ \int_{\B_{2R}} \int_{\B_{3R}\setminus \B_R}  (|x|+1)^{2-d}\,(|x+y|+1)^{2-2d}\,  \dd x\,  \dd y \\
    &\lesssim R \int_{\B_{R}} (|x|+1)^{1-d} \int_{\B_{3R}}   (|z|+1)^{2-2d}\, \dd z \, \dd x \\ &\qquad+ \int_{\B_{3R}\setminus \B_R} (|x|+1)^{2-d} \int_{\B_{5R}}   (|z|+1)^{2-2d}\, \dd z\,  \dd x \\
    &\lesssim R^{4-d} \lesssim \frac{1}{R^2}\Gamma(R)^2.
\end{align*}
Second, using that $|z-y|\geq |y|-|z|\geq R$ for $(y,z)\in (\R^d \setminus \B_{2R})\times \B_R$, we obtain
\begin{align*}
    &\int_{\R^d\setminus \B_{2R}}\left\lvert\int_{\B_{R}}  \tilde g_R(y,z)\, \dd z\right\rvert^2 \dd y \leq R^{2-2d} 
    \int_{\R^d\setminus \B_{2R}}\left\lvert\int_{\B_{R}}  e^{-c|z-y|/R}(|z|+1)^{1-d}\, \dd z\right\rvert^2 \dd y
    \\ 
    &\leq R^{2-2d} \int_{\R^d\setminus \B_{2R}} e^{-2c |y| /R} \left\lvert\int_{\B_{R}}  e^{c|z|/R}(|z|+1)^{1-d}\, \dd z\right\rvert^2 \dd y \lesssim R^{4-d} \lesssim \frac{1}{R^2}\Gamma(R)^2. 
\end{align*}
Altogether, we find that
\[
  \sum_{y\in \Z^d}|\partial'_y\nabla p(0)|^2\lesssim_{\tx} (\log R)^2 + \frac{1}{R^2} \Gamma(R)^2 \lesssim_{\tx}   \widetilde\Gamma(R)^2.
\]
{\bf Step 3.} The $L_p$ version of the Efron--Stein inequality then yields
\[
  |\nabla p(x)-\mb E[\nabla p(x)]|\lesssim_{\tx_x}\widetilde\Gamma(R) \quad\forall x\in B_R.
\]
Further, using \cite[Lemma~40]{GT-23} (with $\mu(R)$ replaced by the size $\Gamma(R)$ of $p$), we obtain
\[
  |\mb E[\nabla p(x)]|\lesssim \widetilde\Gamma(R).
\]
Therefore, we conclude that
\[
|\nabla p(x)|\lesssim_{\tx_x} \widetilde\Gamma(R) \quad\forall x\in B_R.
\]
\end{proof}

\begin{proof}[Proof of Theorem \ref{thm:opt-iid-homo}]
    Propositions \ref{prop:size-ep} and \ref{prop:grad-p} give the bounds for $\lvert p_j^k\rvert$ and $\lvert \nabla p_j^k\rvert$.
    By repeating the same analysis, we get the corresponding bounds for $\lvert s_j\rvert$ and $\lvert \nabla s_j\rvert$. By combining Proposition \ref{lem:two-scale-exp} with Propositions \ref{prop:zero}, \ref{prop:size-ep}, and \ref{prop:grad-p}, we obtain the desired conclusion.
\end{proof}

\appendix

\end{document}